\newcommand{\leqnomode}{\tagsleft@true\let\veqno\@@leqno}
\newcommand{\reqnomode}{\tagsleft@false\let\veqno\@@eqno}
\newtheorem{theorem}{Theorem}[section]
\newtheorem*{mtheorem}{Theorem \ref{thm:graded-simple}}
\newtheorem{lemma}[theorem]{Lemma}
\newtheorem{cor}[theorem]{Corollary}
\newtheorem{conjecture}[theorem]{Conjecture}
\theoremstyle{definition}
\newtheorem{remark}[theorem]{Remark}
\newcommand{\pf}{\begin{proof}}
\newcommand{\epf}{\end{proof}}
\newcommand{\ad}{\operatorname{ad}}
\newcommand{\Cosoc}{ \operatorname{Cosoc}}
\newcommand{\Der}{\operatorname{Der}}
\newcommand{\End}{\operatorname{End}}
\newcommand{\Ind}{\operatorname{Ind}}
\newcommand{\intt}{\operatorname{int}}
\newcommand{\rk}{\operatorname{rk}}
\newcommand{\Q}{{\mathbb Q}}
\newcommand{\Z}{{\mathbb Z}}
\newcommand{\Ac}{\mathcal A}
\newcommand{\Bc}{\mathcal B}
\newcommand{\Ic} {\mathcal I}
\newcommand{\Kc}{\mathcal K}
\newcommand{\Lc}{\mathcal L}
\newcommand{\Nc}{\mathcal N}
\newcommand{\Ssr}{\mathscr S}
\newcommand{\Vs}{\mathscr V}
\newcommand{\fsl}{\mathfrak{sl}}
\newcommand{\fg}{\mathfrak{g}}
\newcommand{\fh}{\mathfrak{h}}
\newcommand{\fm}{\mathfrak{m}}
\newcommand{\fq}{\mathfrak{q}}
\newcommand{\fs}{\mathfrak{s}}
\newcommand{\Ad}{\mathrm{Ad}}
\newcommand{\Ann}{\mathrm{Ann}}
\newcommand{\diff}{\mathrm{d}}
\newcommand{\Free}{\operatorname{Free}}
\newcommand{\Image}{\mathrm{Im}}
\newcommand{\Ker}{\operatorname{Ker}}
\newcommand{\Supp}{\operatorname{supp}}
\begin{document}
\title[Noetherian enveloping algebras]{Noetherian enveloping algebras of simple graded Lie algebras}
\author[Andruskiewitsch]{Nicol\'as Andruskiewitsch}
\address[N.~Andruskiewitsch]{Facultad de Matem\'atica, Astronom\'ia y F\'isica,
\newline Universidad Nacional de C\'ordoba. CIEM -- CONICET. \newline
Medina Allende s/n (5000) Ciudad Universitaria, C\'ordoba, Argentina}
\email{nicolas.andruskiewitsch@unc.edu.ar}

\author[Mathieu]{Olivier Mathieu}
\address[O.~Mathieu]{CNRS and Universit\'e Claude Bernard Lyon
\newline
Institut Camille Jordan
UMR 5028 du CNRS, 
 \newline
69622 Villeurbanne Cedex, France}
\email{mathieu@math.univ-lyon1.fr}

\address{
\newline 
SUSTech, Shenzhen International Center for Mathematics, Shenzhen, China}

\begin{abstract}
It is shown that if the universal enveloping algebra of a simple $\Z^n$-graded  Lie algebra is Noetherian, then the Lie algebra 
is finite-dimen\-sional.
\end{abstract}

\thanks{\noindent  \emph{2020 MSC.} 16S30, 17B35.
N.~A. was  partially supported by the Secyt (UNC),
CONICET (PIP 11220200102916CO), FONCyT-ANPCyT (PICT-2019-03660). O.M. was partially supported by
UMR 5208 du CNRS. Both authors were partially supported
by the Shenzhen International Center of Mathematics, SUSTech.}

%\date\today
\maketitle
\setcounter{tocdepth}{2}
%\tableofcontents

\section{Introduction}  

Let $K$ be an algebraically closed field of characteristic 0. 
If a Lie algebra is finite-dimensional, then its enveloping algebra  is Noetherian (i.e. Noetherian on the right or on the left, which is equivalent for the enveloping algebras).
Whether the converse is true  has been asked by many authors, 
among them  R. Amayo and I. Stewart, see \cite[Question 27]{Amayo-Stewart}, 
 K. A. Brown, see \cite[Question B]{Brown-survey}, J. Dixmier, and V. Latyshev.
 Besides its intrinsic interest, this is an unavoidable question  
 in the problem of the classification of Noetherian Hopf algebras.
 S. Sierra and C. Walton stated this question as a Conjecture.

\begin{conjecture}\label{conjecture:enveloping-Noetherian} \cite{Sierra-Walton}
The universal enveloping algebra of an infinite-dimen\-sional Lie algebra is not Noetherian.
\end{conjecture}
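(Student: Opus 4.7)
The plan is to use the paper's main theorem --- that a simple $\Z^n$-graded Lie algebra with Noetherian enveloping algebra is finite-dimensional --- as a black box, combined with structural reductions, in order to treat an arbitrary infinite-dimensional Lie algebra $L$ with $U(L)$ Noetherian. Assuming such an $L$ exists, I aim to derive a contradiction by extracting from $L$ a simple $\Z^n$-graded infinite-dimensional Lie subquotient.

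First I would record the easy consequences of the Noetherian hypothesis. Any Noetherian algebra is finitely generated, so $U(L)$, and hence $L$ itself as a Lie algebra, is finitely generated. For any ideal $I \lhd L$ the quotient $U(L/I) = U(L)/U(L)I$ is Noetherian; and by PBW, $U(L)$ is free as a right $U(I)$-module, so by faithful flatness $U(I)$ is Noetherian too. These two closure properties allow free passage between ideals, quotients, and the original algebra. The abelian case is disposed of at once, since then $U(L) = S(L)$ is a polynomial ring in $\dim L$ variables, which fails to be Noetherian when $\dim L = \infty$.

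Second, I would try to reduce to a simple subquotient and then to a $\Z^n$-graded one. Choose a maximal proper ideal $M \lhd L$ (which exists since $L$ is finitely generated); then $L/M$ is simple. If $\dim L/M = \infty$, set $\fs := L/M$; otherwise pass to $M$ (which is itself finitely generated, infinite-dimensional, and has Noetherian enveloping algebra by the first step) and iterate. Assuming this process produces a simple infinite-dimensional $\fs$ with $U(\fs)$ Noetherian, I would then manufacture a $\Z^n$-grading via the weight decomposition under a maximal ad-diagonalisable subalgebra $\ft \subset \fs$: choosing a $\Z$-basis of the weight lattice turns this decomposition into a $\Z^n$-grading with respect to which $\fs$ is graded-simple, and the main theorem of the paper then forces $\dim \fs < \infty$, a contradiction.

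The main obstacle --- and precisely why the full conjecture is not settled here --- lies in this second step. The iterative descent into maximal ideals may fail to terminate at a simple stage: a finitely generated infinite-dimensional Lie algebra need not admit any simple infinite-dimensional subquotient, and no structural theorem is known that would force the existence of one merely under the Noetherian hypothesis on the enveloping algebra. Even granted a simple infinite-dimensional $\fs$, abstract simple Lie algebras can fail to contain a non-zero ad-diagonalisable element, and then the weight-space construction of the grading collapses. Progress on either of these two points would, in view of the main theorem of this paper, suffice to resolve Conjecture \ref{conjecture:enveloping-Noetherian} in full generality.
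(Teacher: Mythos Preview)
The statement in question is Conjecture~\ref{conjecture:enveloping-Noetherian}, which the paper records as an \emph{open problem}; there is no proof of it in the paper to compare against. The paper's contribution is Theorem~\ref{thm:graded-simple}, which settles the conjecture only for simple $\Z^n$-graded Lie algebras.

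Your proposal is therefore not a proof but a diagnosis of why Theorem~\ref{thm:graded-simple} does not imply the full conjecture, and on that level it is accurate. The two obstacles you isolate are genuine and remain open. For the first: your descent $L \supset M_1 \supset M_2 \supset \cdots$ through maximal ideals may continue indefinitely with every simple quotient $M_i/M_{i+1}$ finite-dimensional, and no known consequence of the Noetherian hypothesis rules this out; there is simply no structure theorem producing a simple infinite-dimensional section of an arbitrary $L$ with $U(L)$ Noetherian. For the second: it is not known that every infinite-dimensional simple Lie algebra over $K$ contains a nonzero ad-semisimple element, so your maximal toral $\ft$ may well be zero and the weight-space grading vacuous. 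A minor point you pass over: even when $\ft\neq 0$, one must still check that the weight lattice has finite rank and that the weight spaces are finite-dimensional; the first follows from finite generation of $\fs$, and the second is handled by Lemma~\ref{wgraded} exactly as in the proof of Theorem~\ref{thm:graded-simple}, so neither of these is the essential difficulty.
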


Intuitively, since `large' Lie algebras satisfy the Conjecture, e.g.
the enveloping algebra of a free Lie algebra in two generators is not Noetherian,
one expects that a counterexample to the Conjecture, if any, should be in some
sense 'small'. In this direction, a breakthrough result was obtained in 2013 by Sierra and Walton. Recall that the {\it Witt algebra} is
$W(1)\coloneqq \Der K[t]$.

\begin{theorem}\label{thm:sierra-walton} \cite[Theorem 0.5]{Sierra-Walton}
The enveloping algebra of $W(1)$ is not Noetherian.
\end{theorem}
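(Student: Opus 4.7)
The plan is to witness failure of the left ascending chain condition in $U(W(1))$ by producing an infinite strictly increasing chain of left ideals, and the strategy exploits the rich representation theory of $W(1)$. Write $W(1) = \bigoplus_{n \geq -1} K e_n$ with $[e_m, e_n] = (n-m)\, e_{m+n}$, and let $\fp := \bigoplus_{n \geq 0} K e_n$ be the parabolic-like subalgebra coming from the non-negative grading, so that $W(1) = K e_{-1} \oplus \fp$ as a vector space.

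First I would analyze the family of induced ``Verma-type'' modules $M_\lambda := U(W(1)) \otimes_{U(\fp)} K_\lambda$ for $\lambda \in K$, where $K_\lambda$ is the one-dimensional $\fp$-module with $e_0 \cdot 1 = \lambda$ and $e_n \cdot 1 = 0$ for $n \geq 1$. By Poincar\'e--Birkhoff--Witt each $M_\lambda$ has $K$-basis $\{e_{-1}^k \otimes 1 : k \geq 0\}$, and the action of each $e_n$ on this basis can be computed explicitly by iterating the commutation relations (only finitely many brackets contribute to any given basis vector because the $\Z$-grading controls weights). The hope is to show that $M_\lambda$ is simple for $\lambda$ outside a discrete subset of $K$, while at the distinguished values one finds explicit singular vectors generating proper submodules, reminiscent of the BGG picture. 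Consequently the annihilators $J_\lambda := \Ann_{U(W(1))}(M_\lambda)$ would form an infinite family of pairwise distinct primitive ideals of $U(W(1))$.

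The heart of the proof is then to upgrade this abundance of primitive ideals into an explicit infinite strictly ascending chain of left ideals. The natural approach is to choose distinct parameters $\lambda_1, \lambda_2, \ldots$ and to produce, for each $n$, an element $u_n \in U(W(1))$ which acts as zero on $M_{\lambda_1}, \ldots, M_{\lambda_{n-1}}$ but nontrivially on $M_{\lambda_n}$; the left ideals $L_N := \sum_{n \leq N} U(W(1))\, u_n$ then form a strictly ascending chain, contradicting left Noetherianity. An alternative variant is to exhibit a genuinely non-Noetherian subquotient of $U(W(1))$, such as a suitable skew group algebra or an idealizer subring attached to a graded subalgebra, and invoke a standard transfer principle.

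The principal obstacle I anticipate is the concrete construction of the witness elements $u_n$: the mere abundance of distinct simple modules is only a heuristic reason for non-Noetherianity, and converting it into a genuine non-stabilizing chain of left ideals requires delicate combinatorial control of PBW expansions together with careful use of the $\Z$-grading to separate the contributions from different $\lambda_n$. This combinatorial core is the technically demanding part; the representation-theoretic framework only sets up the arena in which the chain can be built.
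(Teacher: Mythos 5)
The paper does not prove this theorem; it is quoted verbatim from \cite[Theorem 0.5]{Sierra-Walton}, so there is no in-paper proof to compare against. The relevant question is whether your proposal could be completed into a correct proof, and as written it cannot.

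The central gap is that an abundance of pairwise distinct primitive ideals, or of simple modules with distinct annihilators, carries no information about Noetherianity. The algebras $U(\fsl(2))$, the first Weyl algebra, and even $K[x]$ are all Noetherian and yet each has infinitely many pairwise distinct primitive ideals. So the entire first paragraph of your plan (the Verma-type modules $M_\lambda$, their generic simplicity, the annihilators $J_\lambda$) establishes nothing toward the goal, and all the weight falls on the construction of the $u_n$, which you explicitly leave open. Moreover, the separation condition you impose on the $u_n$ is too weak to deliver what you need: requiring that $u_n$ annihilate $M_{\lambda_1},\dots,M_{\lambda_{n-1}}$ but not $M_{\lambda_n}$ does not force $L_N\coloneqq\sum_{n\leq N}U(W(1))u_n$ to grow strictly, because nothing prevents some $u_m$ with $m<N$ from also acting nontrivially on $M_{\lambda_N}$, and then no module separates $L_{N-1}$ from $L_N$. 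To make the module-theoretic separation go through you would instead need $\bigcap_{i\neq n}J_{\lambda_i}\not\subseteq J_{\lambda_n}$ for every $n$; but that is already equivalent to the failure of the ascending chain condition you are trying to prove, and it is not even true in natural non-Noetherian test cases such as $K[x_1,x_2,\dots]$ with $J_n=(x_n)$. In short, the approach is circular: the hard content has been relocated, not addressed.

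Sierra and Walton's actual argument is entirely different in kind. They work with the positive part $W_+$, pass to the Poisson algebra $\gr U(W_+)\cong S(W_+)$, and use noncommutative projective geometry: they locate inside a homomorphic image of $U(W_+)$ a non-Noetherian idealizer-type subring attached to a twisted homogeneous coordinate ring of $\mathbb{P}^1$, and transfer non-Noetherianity back. Your one closing sentence about ``an idealizer subring attached to a graded subalgebra'' is in fact the correct germ of the real proof, but you give none of the content that makes it work: which module, which idealizer, and which transfer lemma. The Verma-module apparatus you put in the foreground is not the arena in which the proof lives, and the heuristic connecting ``many simple modules'' to ``non-Noetherian'' should be discarded.
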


This result allows us to conclude that 
the enveloping algebra of an infinite-dimensional 
simple $\Z$-graded  Lie algebra of finite growth
is not Noetherian, by going over the classification of such 
Lie algebras obtained in \cite{Mathieu-inv2}. 

However there are neither classification results for the simple $\Z$-graded  Lie algebras of arbitrary growth, nor  for the simple $\Z^n$-graded  Lie algebras for $n\geq 2$. Nevertheless, the
following result will be established.

\begin{theorem}\label{thm:graded-simple}
The universal enveloping algebra of an infinite-dimen\-sional simple $\Z^n$-graded  Lie algebra is not Noetherian.
\end{theorem}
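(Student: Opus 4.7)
My approach would be to reduce Theorem \ref{thm:graded-simple} to Theorem \ref{thm:sierra-walton} by exhibiting, inside any simple infinite-dimensional $\Z^n$-graded Lie algebra $L$, a Lie subalgebra whose enveloping algebra is already known to fail Noetherianity. The basic reduction principle is descent through faithfully flat extensions: by PBW, $U(L)$ is a free (hence faithfully flat) left $U(M)$-module for every Lie subalgebra $M \subseteq L$; any ascending chain $I_1 \subseteq I_2 \subseteq \cdots$ in $U(M)$ extends to a chain $U(L)I_j$ that stabilizes if $U(L)$ is Noetherian, and faithful flatness yields $U(L)I_j \cap U(M) = I_j$, so left Noetherianity descends from $U(L)$ to $U(M)$. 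It thus suffices to exhibit in $L$ either a copy of $W(1)$, and invoke Theorem \ref{thm:sierra-walton} directly, or an infinite-dimensional simple $\Z$-graded Lie subalgebra of finite growth, and invoke the corollary of Theorem \ref{thm:sierra-walton} and \cite{Mathieu-inv2} recalled in the excerpt.

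To produce such a subalgebra I would use the $\Z^n$-grading by \emph{coarsening}. Each primitive linear functional $\lambda \colon \Z^n \to \Z$ induces a $\Z$-grading $L = \bigoplus_{k \in \Z} L_k^{\lambda}$ with $L_k^{\lambda} = \bigoplus_{\lambda(\alpha) = k} L_{\alpha}$, and a preliminary lemma, based on graded simplicity of $L$, should show that for $\lambda$ outside a countable union of hyperplanes the support of this $\Z$-grading is infinite in both directions. Using this $\Z$-grading together with the remaining $(n-1)$-dimensional space of degree derivations one then attempts either to build an $\ad$-semisimple element $h \in L$ and a weight vector $e$ whose iterated brackets generate a copy of $W(1)$, or to extract a simple infinite-dimensional $\Z$-graded subquotient of finite growth. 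In the finite-growth regime the polynomial dimension bounds on $L_k^{\lambda}$ let one apply the classification of \cite{Mathieu-inv2}, while in the infinite-growth regime the rich spectrum of the degree torus should supply the ingredients for the $W(1)$-subalgebra.

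The hardest component of this plan is the structural step: with no classification of simple $\Z^n$-graded Lie algebras available, one must show on purely abstract grounds that at least one of the two alternatives always arises. I would attempt this by induction on $n$, combined with a careful analysis of the support $\{\alpha : L_{\alpha} \neq 0\} \subseteq \Z^n$, aiming to rule out the pathological scenario in which no coarsening has finite growth and yet no $W(1)$ is visible. Exactly where graded simplicity enters essentially in this final step is the main open question in my plan; I suspect the authors handle it using techniques specific to the combinatorics of $\Z^n$-supports of graded-simple Lie algebras, rather than through the formalism of the enveloping algebra, since a direct ideal-theoretic attack on $U(L)$ in such generality seems out of reach.
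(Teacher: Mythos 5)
There is a genuine gap, precisely in the structural step that you yourself flag as the main open question. Your reduction principle (a Lie subalgebra inherits Noetherianity of $U$, which is Lemma~\ref{basic}\ref{section}) and the idea of coarsening the $\Z^n$-grading to a $\Z$-grading are both correct and match the paper, which projects on a coordinate with $m_1\neq 0$. But your anticipated dichotomy --- either a visible copy of $W(1)$ or a finite-growth $\Z$-graded subquotient --- is not what actually emerges, and neither alternative can be forced by the toolkit you describe. For a rank-one simple $\Z$-graded Lie algebra of class $\Ssr$, the paper's dichotomy (Theorem~\ref{thmS}) is: either $\mathcal{L}\cong\fsl(2)$, $W(1)$ or $W$ (where Theorem~\ref{thm:sierra-walton} and the finite-growth classification do the work), or $\mathcal{L}$ admits the contragredient algebra $G(^{2\,2}_{2\,2})$ as a section, hence contains a non-abelian \emph{free} Lie algebra. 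It is the free Lie algebra, not a hidden $W(1)$, that disposes of the infinite-growth regime; your idea of building a $W(1)$ from an $\ad$-semisimple $h$ and weight vector $e$ by hand would not go through in general. Rank one class $\Vs$ (no real roots) is handled by a different mechanism (failure of finite generation, Lemma~\ref{nfg}), and rank $\geq 2$ by yet another dichotomy (Lemma~\ref{dicho}: infinite root strings giving a weakly graded subalgebra with an infinite-dimensional piece, versus weak integrability forcing $\mathcal{L}$ to be affine or finite-dimensional). None of these three branches appears in your plan, and ``induction on $n$ with analysis of $\Z^n$-supports'' is not the paper's mechanism: the paper coarsens to $\Z$ once and for all and then imports the fine structure theory of \cite{Mathieu-inv2}.

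There is also an upstream gap: a simple $\Z^n$-graded Lie algebra need not be simple as an ungraded Lie algebra (e.g.\ $\fsl(2)\otimes K[t,t^{-1}]$), and your coarsening step tacitly assumes honest simplicity when it asserts that the $\Z$-coarsening is again graded-simple with the needed support properties. The paper first eliminates the graded-simple-but-not-simple case via the centroid (Lemmas~\ref{nonsimplemod} and~\ref{centroid}): a nonzero centroidal endomorphism $\theta$ of nonzero degree exists, and $\operatorname{span}\{\theta^p(x):p\in\Z\}$ for any nonzero homogeneous $x$ is an infinite-dimensional abelian subalgebra, contradicting Noetherianity by Lemma~\ref{basic}\ref{abelian-section}. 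Without this preliminary reduction, the rest of your plan cannot even get started.
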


According to our convention below, it is assumed in Theorem \ref{thm:graded-simple}
that {\it  all dimensions of the homogeneous 
components of the $\Z^n$-graded  Lie algebras are finite}.

The proof of Theorem \ref{thm:graded-simple} is divided
into four parts (three of them involve the case $n=1$). 
Some parts use concrete results, namely the Theorem of Sierra and Walton and the classification results of the second author \cite{Mathieu-inv2}.

\section{Conventions and Preliminaries}\label{prelim}

\subsection{\it Conventions about graded vector spaces}

\

\medbreak
\noindent
In the whole paper, we will adopt the following convention.
A vector space $M$ endowed with a decomposition
$M=\oplus_{{\mathbf m}\in \Z^n} M_{\bf m}$ will be called a 
{\it $\Z^n$-graded} vector space {\it only if} all homogeneous components $M_{\bf m}$  are finite-dimensional. 

\smallbreak
A Lie algebra $\mathcal{L}$ endowed with a $\Z^n$-grading is called
a {\it $\Z^n$-graded Lie algebra} if we have
\begin{align*}
[\mathcal{L}_{\bf n},\mathcal{L}_{\bf m}]
&\subset \mathcal{L}_{\bf n+m} & \text{for any } \bf n, \bf m \in \Z^n.
\end{align*}

 A $\Z^n$-graded Lie algebra $\mathcal{L}$ 
of dimension $\geq 2$ without nontrivial proper $\Z^n$-graded ideals
is called a {\it simple $\Z^n$-graded} Lie algebra.
For example,  $\fsl(2)\otimes K[t,t^{-1}]$ 
is a simple $\Z$-graded Lie algebra, but it is not simple as a Lie algebra. 
The definitions of a
 \emph{ $\Z^n$-graded $\mathcal{L}$-module} and  a
 \emph{ simple $\Z^n$-graded $\mathcal{L}$-module}
are similar.

\subsection{\it Criteria for Noetherianity of enveloping algebras}

\

\medbreak
\noindent A {\it section} of a Lie algebra $L$ is a Lie algebra
$\fs$ isomorphic to  $\fq/\fm$ for some Lie subalgebra  $\fq\subset L$ 
and some ideal $\fm$ of $\fq$.

The following standard observations are  useful, see
\cite[Lemma 1.7]{Sierra-Walton} and \cite[Proposition 2.1]{Buzaglo}.

\begin{lemma}\label{basic} 
	Let $L$ be a Lie algebra such that $U(L)$ is Noetherian. 
	
	\begin{enumerate}[leftmargin=*,label=\rm{(\alph*)}]
		\item\label{acc}
		$L$ satisfies the ascending chain condition on Lie subalgebras.
		
		\smallbreak
		\item\label{fp}
		$L$ is finitely presented and $H_k(L)$ is finite-dimensional for any $k\geq 0$.
		
		\smallbreak
		\item\label{section} If $\fs $ is a section of $L$,
		then $U(\fs)$ is  also Noetherian. 
		
		\smallbreak
		\item\label{abelian-section} If $\fs $ is an abelian  section of $L$,
		then $\dim\fs < \infty$. 
		
		\smallbreak
		\item\label{sup} 
		If $L$ is a Lie subalgebra of finite codimension of some Lie algebra $L'$, then $U(L')$ is also Noetherian. \qed
	\end{enumerate}
\end{lemma}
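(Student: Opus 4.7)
The plan rests on the Poincar\'e-Birkhoff-Witt theorem. I will repeatedly use that for any Lie subalgebra $\fh \subset L$, $U(L)$ is free (hence faithfully flat) as both a left and right $U(\fh)$-module, and that $U(L)\fh \cap L = \fh$ for any subspace $\fh \subset L$ (read off from PBW monomials by extending a basis of $\fh$ to one of $L$).

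For \ref{acc}, assign each Lie subalgebra $\fh \subset L$ the left ideal $U(L)\fh$; the intersection identity makes this assignment strictly monotone, so ACC for left ideals of $U(L)$ forces ACC on Lie subalgebras. Part \ref{section} splits into two pieces. Given a subalgebra $\fq \subset L$ and a chain of left ideals $J_1 \subset J_2 \subset \cdots$ of $U(\fq)$, the extensions $U(L)J_k$ stabilize in $U(L)$, and faithful flatness yields $U(L)J_k \cap U(\fq) = J_k$, so the original chain stabilizes. Given a Lie ideal $\fm \subset \fq$, $U(\fq/\fm)$ is a quotient ring of $U(\fq)$, hence Noetherian. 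Part \ref{abelian-section} is then immediate, since $U(\fs) = S(\fs)$ is a polynomial ring, Noetherian only in finitely many variables.

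For \ref{fp}, use $H_k(L) = \mathrm{Tor}_k^{U(L)}(K, K)$. Noetherianity of $U(L)$ together with $K$ being cyclic as $U(L)$-module produces a resolution of $K$ by finitely generated free $U(L)$-modules; tensoring with $K$ gives a complex of finite-dimensional spaces, so each $H_k(L)$ is finite-dimensional. Finite generation of $L$ then follows from \ref{acc} applied to the family of finitely generated subalgebras, since otherwise one could iteratively adjoin new generators to build a strictly ascending chain. For finite presentation, write $L = F/R$ with $F$ a finitely generated free Lie algebra; Hopf's formula identifies $H_2(L)$ with a quotient of $R/[F,R]$, and a Nakayama-type argument exploiting the $U(L)$-module structure on $R/[F,R]$ lifts finite-dimensionality of $H_2(L)$ to finite generation of $R$ as an $F$-ideal.

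Part \ref{sup} is the most delicate. The plan is a filtration argument: choose a finite-dimensional complement $V$ of $L$ in $L'$ and filter $U(L')$ by the number of $V$-factors in ordered PBW monomials. The inclusions $[L,V], [V,V] \subset L+V$ make this an algebra filtration whose associated graded is isomorphic, as a graded algebra, to $U(L \ltimes V)$, where $V$ is abelian and $L$ acts on $V$ by the projection of $\ad L$ onto the $V$-component (a genuine Lie action, as a short Jacobi calculation shows). The problem reduces to Noetherianity of $U(L \ltimes V) = S(V)\# U(L)$, a smash product over $U(L)$ by a finite-dimensional abelian ideal; this is an iterated extension of $U(L)$ in $\dim V$ commuting generators with derivation-like commutation relations with $U(L)$, and Noetherianity transfers via a Hilbert-basis-type argument adapted to this PBW deformation. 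Standard filtered-to-graded lifting then finishes \ref{sup}. I expect this last step to be the main obstacle: the relevant derivations land in the degree-one piece rather than in $U(L)$ itself, so the classical Ore extension theorem does not apply directly, and some care is needed in assembling the Hilbert-basis argument for the resulting PBW deformation.
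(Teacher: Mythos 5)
The paper does not prove this lemma; it cites Sierra--Walton and Buzaglo and records the statements with a \qed, so you are supplying new content rather than paralleling the paper's argument. Your treatment of \ref{acc}, \ref{section}, and \ref{abelian-section} via PBW-faithful-flatness and the intersection identity $U(L)\fh\cap L=\fh$ is correct and standard. Part \ref{fp} is essentially right as well, though be aware that Hopf's formula gives $H_2(L)=(R\cap[F,F])/[F,R]$ as a \emph{subspace} of $R/[F,R]$, not a quotient; the clean route is that $R/[F,R]$ sits in an extension with kernel $H_2(L)$ and cokernel $(R+[F,F])/[F,F]\subset F/[F,F]$, both finite-dimensional once $F$ has finite rank, and then finite-dimensionality of $R/[F,R]$ gives finite generation of $R$ as an ideal by the Nakayama-type argument on the lower central series of $F$.

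The genuine gap is in \ref{sup}, and your own remark locates it precisely: after filtering $U(L')$ by $V$-degree you land on $\gr U(L')\cong U(L\ltimes V)$, but this is \emph{not} easier than the original problem. The commutation relations send $v_i$ past $U(L)$ into $V\cdot U(L)$, which is neither a derivation into $U(L)$ nor a per-generator normalizing condition, so neither the Ore extension theorem nor McConnell--Robson's almost-normalizing-extension theorem applies directly. The fix is to use a different filtration. Let $\rho\colon L\to\mathfrak{gl}(L'/L)$ be the action on the finite-dimensional quotient; its kernel $\fk$ is an ideal of $L$ of finite codimension, and $U(\fk)$ is Noetherian by your argument for \ref{section}. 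Choose a complement $Y$ of $\fk$ in $L$ and a complement $V$ of $L$ in $L'$, and assign PBW filtration degrees $\deg\fk=0$, $\deg Y=1$, $\deg V=2$. Because $\fk$ is an ideal of $L$ and $[\fk,L']\subset L$, \emph{every} commutator among $Y$-letters, $V$-letters, and cross terms strictly drops degree; the only commutators that survive are those inside $\fk$. Hence $\gr U(L')$ is the \emph{central} polynomial ring $U(\fk)[\bar y_1,\dots,\bar y_c,\bar v_1,\dots,\bar v_m]$, which is Noetherian by the ordinary (noncommutative-coefficient) Hilbert basis theorem, and filtered-to-graded lifting gives $U(L')$ Noetherian. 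The missing idea in your proposal, then, is to let the degree-$0$ part shrink from $U(L)$ to $U(\fk)$ so that the troublesome $V$-valued derivations are pushed into positive degree and vanish in the associated graded.
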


\subsection{\it Examples of   enveloping algebras that are not Noetherian}

\

\medbreak
\noindent Lemma \ref{basic} allows us to deduce that many 
Lie algebras satisfy Conjecture \ref{conjecture:enveloping-Noetherian} 
from  Lie algebras that are already known to fulfill it, for instance:

\begin{enumerate}[leftmargin=*,label=\rm{(\roman*)}]
	
	\medbreak
	\item The free Lie algebra $\Free(Z)$ on a vector space  $Z$ of dimension $\geq 2$. 
	Indeed $U(\Free(Z))  \simeq T(Z)$ is not Noetherian.

	\medbreak
	\item \cite [Theorem 0.5]{Sierra-Walton} 
	The positive Witt algebra $W_+$. 
	By Lemma \ref{basic}\ref{sup}, this result is equivalent to the remarkable Theorem
	\ref{thm:sierra-walton}.
\end{enumerate}

\medbreak
See \cite{Sierra-Walton}, \cite{Buzaglo} for a list of Lie algebras  
whose enveloping algebras are not Noetherian by the remarks above. 
By Lemma \ref{basic} \ref{section} and (i), 
another example is  a Kac–Moody algebra of indefinite type, 
cf. \cite[Corollary 9.12]{Kac-libro}.

\section{Growth of modules over  \texorpdfstring{$\Z$}{}-graded Lie algebras}\label{Zgrading}

\noindent In this section and in the next three, we investigate the Noetherianity condition for $\Z$-graded Lie algebras.
The present section involves the questions of finite generation and growth.

\smallbreak
Given  a $\Z$-graded vector space $M$ and an integer $n\in\Z$, we set
\begin{align*}
M_{\geq n}\coloneqq \oplus_{k\geq n}\,M_k.
\end{align*}
 The subspaces $M_{>n}$, $M_{\leq n}$ and 
$M_{< n}$ are similarly defined.

\subsection{\it Finite generation}

\

\medbreak
\noindent Let $\mathcal{L}$ be a $\Z$-graded Lie algebra.
We set
\begin{align*}\mathcal{L}^+=\mathcal{L}_{>0} \quad \text{ and } \quad
\mathcal{L}^-=\mathcal{L}_{<0}.
\end{align*}

\begin{lemma}\label{fg} 
Assume that the Lie algebra $\mathcal{L}$ is finitely generated.
Then $\mathcal{L}^+$ and $\mathcal{L}^-$ are finitely generated subalgebras.

Moreover let $M$ be a finitely generated $\Z$-graded 
$\mathcal{L}$-module. Then the $\mathcal{L}^+$-module $M_{\geq 0}$  and the  $\mathcal{L}^-$-module $M_{\leq 0}$ are finitely generated.
\end{lemma}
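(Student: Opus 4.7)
The plan is to fix a finite-dimensional homogeneous subspace $V \subset \mathcal{L}$ that generates $\mathcal{L}$ as a Lie algebra, and to set $D := \max\{|\deg v| : v \in V \text{ homogeneous}, v \neq 0\}$. Note that by the convention, each graded component $\mathcal{L}_k$ is finite-dimensional.

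For the first assertion, I claim that $\mathcal{L}^+$ is generated as a Lie algebra by the finite-dimensional subspace $W := \bigoplus_{0 < k \leq 2D} \mathcal{L}_k$, and I would prove this by induction on $k$, showing $\mathcal{L}_k \subseteq \langle W \rangle$ for all $k \geq 1$. The case $k \leq 2D$ is immediate. For $k > 2D$, one writes an arbitrary $y \in \mathcal{L}_k$ as a sum of right-normed iterated brackets $T(v_1, \ldots, v_r) := [v_1, [v_2, \ldots, [v_{r-1}, v_r]\ldots]]$ with $v_i \in V$ homogeneous. Tracking the partial sums $\tau_s := \sum_{j \geq s} \deg v_j$, since $\tau_1 = k > 2D$, $\tau_{r+1} = 0$ and $|\tau_s - \tau_{s+1}| \leq D$, there is a first index $s^*$ with $\tau_{s^*} \leq 2D$; one checks $\tau_{s^*} > D$, so the tail $T(v_{s^*}, \ldots, v_r)$ lies in $\mathcal{L}^+_{(D, 2D]} \subseteq W$. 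I then reconstruct $y$ by successively applying $\ad_{v_{s^*-1}}, \ldots, \ad_{v_1}$. The case $\deg v_j > 0$ is direct since then $v_j \in W$ and the bracket stays in $\langle W \rangle$; the case $\deg v_j = 0$ uses that $\ad_{v_j}$ preserves every homogeneous component of $\mathcal{L}$, hence sends $W$ into $W$ and (being a derivation) stabilises $\langle W \rangle$.

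The main obstacle is the case $\deg v_j < 0$, as $\ad_{v_j}$ can map $W$ outside $\mathcal{L}^+$. I would handle it by expanding the already-known $y_{j+1} \in \langle W \rangle$ as a Lie polynomial in $W$-elements and applying $\ad_{v_j}$ via the Leibniz rule; each resulting term has a single factor $w \in W$ replaced by $[v_j, w]$, of degree at most $2D-1$. When $[v_j, w]$ has positive degree it lies in $W$, and when it has degree zero the surrounding bracket is controlled by the degree-zero derivation argument applied to the general element $[v_j, w] \in \mathcal{L}_0$. The negative-degree outcome is the subtlest: it is absorbed by first re-ordering the generators $v_1, \ldots, v_r$ via Jacobi so that the non-negative-degree ones appear at the beginning (thereby keeping all intermediate partial sums $\tau_j$ strictly below $k$), which lets the outer induction on $k$ apply; the re-ordering itself produces auxiliary monomials of the same length, handled by a secondary induction on bracket length.

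For the module statement I would pursue a parallel approach via PBW. Let $m_1, \ldots, m_p$ be homogeneous $\mathcal{L}$-module generators of $M$ with $\Delta := \max_i \deg m_i$. By PBW, $U(\mathcal{L}) = U(\mathcal{L}^+) \cdot U(\mathcal{L}_0 \oplus \mathcal{L}^-)$, so $M = U(\mathcal{L}^+) \cdot P$, where $P := U(\mathcal{L}_0 \oplus \mathcal{L}^-) \cdot \{m_1, \ldots, m_p\} \subseteq M_{\leq \Delta}$, and each graded piece of $P$ is finite-dimensional by the convention; consequently $P \cap M_{\geq 0} = \bigoplus_{0 \leq k \leq \Delta} P_k$ is finite-dimensional. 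I claim $M_{\geq 0} = U(\mathcal{L}^+) \cdot (P \cap M_{\geq 0})$: given a decomposition $m = \sum u^+_\alpha p_\alpha$ of a homogeneous $m \in M_{\geq 0}$, terms with $\deg p_\alpha < 0$ are rewritten by peeling an $\mathcal{L}^+$-factor off $u^+_\alpha$, commuting it through $p_\alpha$ in $U(\mathcal{L})$, and re-expanding in PBW form, iteratively raising the $P$-residue into non-negative degree; the same negative-degree obstacle as in the subalgebra case reappears and is treated analogously. The statements for $\mathcal{L}^-$ and $M_{\leq 0}$ follow by applying the above to the Lie algebra and module equipped with the reversed $\Z$-grading.
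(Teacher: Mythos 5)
Your attack on the first assertion diverges from the paper's, which simply invokes Lemma~18 of \cite{Mathieu-inv1}. Your from-scratch attempt contains a genuine gap at exactly the step you flag as ``the subtlest.'' The Jacobi identity does not merely permute the factors of a right-normed monomial $T(v_1,\dots,v_r)$: applying it to swap adjacent entries $v_i,v_{i+1}$ produces, in addition to the permuted term, a term in which $v_i,v_{i+1}$ are fused into a single bracket $[v_i,v_{i+1}]$. That fused element need not lie in $V$, and its degree can be as large as $2D$ in absolute value, so the new term is not a ``Lie monomial in $V$-factors of the same length'' to which your secondary induction would apply. You would have to broaden the class of allowable factors and set up the induction carefully, and even then it is unclear the bookkeeping closes up. Moreover, even after re-ordering so the non-negative-degree $v_j$ come first, some intermediate $T(v_j,\dots,v_r)$ can have non-positive degree, so it does not land in $\mathcal{L}^+$ and cannot be fed back into the outer induction as stated.

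A clean way to make the first assertion self-contained, and the mechanism effectively behind the cited Lemma~18, is to avoid re-ordering altogether. With $G=\bigoplus_{|k|\le d}\mathcal{L}_k$ generating $\mathcal{L}$, set $A=\mathcal{L}_{\le 0}$ and $B=\langle\bigoplus_{1\le k\le d}\mathcal{L}_k\rangle$, and prove $[A,B]\subseteq A+B$ by induction on the bracket length of a spanning monomial of $B$; the only thing to observe is that for $a$ homogeneous of degree $\le 0$ and $x$ homogeneous of degree in $[1,d]$ one has $\deg[a,x]\le d$, so $[a,x]$ lands either in $A$ or in $\bigoplus_{1\le k\le d}\mathcal{L}_k\subseteq B$. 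Thus $A+B$ is a graded Lie subalgebra containing $G$, hence all of $\mathcal{L}$, and comparing positive parts gives $\mathcal{L}^+=B$. Note this also produces the sharper bound $d$ rather than your $2D$. This is the same flavor of argument as Lemma~\ref{b+bb} in the paper.

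Your module argument is based on the right idea (PBW and the decomposition $U(\mathcal{L})=U(\mathcal{L}^+)\,U(\mathcal{L}_{\le 0})$), but the iterative ``lifting of negative-degree residues'' is an unnecessary complication and re-imports the same difficulty. The paper's route is tighter: from $M=U(\mathcal{L}^+)\cdot M_{\le e}$ and $\mathcal{L}^+$ being generated by $\bigoplus_{1\le k\le d}\mathcal{L}_k$ one gets $M_n=\sum_{1\le k\le d}\mathcal{L}_k\cdot M_{n-k}$ for all $n>e$; after replacing $e$ by $\max(e,d)$ this immediately bootstraps to show $M_{\ge 0}$ is generated over $U(\mathcal{L}^+)$ by the finite-dimensional space $\bigoplus_{0\le m\le e}M_m$, with no residue-shuffling needed.
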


\begin{proof} By hypothesis, there is an integer $d>0$ such that $\oplus_{-d\leq k\leq d}\,\mathcal{L}_k$ generates
$\mathcal{L}$. By Lemma 18 of \cite{Mathieu-inv1},
$\oplus_{1\leq k\leq d}\,\mathcal{L}_k$ generates
$\mathcal{L}^+$ and $\oplus_{-d\leq k\leq -1}\,\mathcal{L}_k$
generates $\mathcal{L}^-$, which proves the first assertion.

Let $S$ be a finite set of generators of $M$. There is an 
integer $e$ such that $S$ lies in $M_{\leq e}$. Since
$M_{\leq e}$ is a $\mathcal{L}_{\leq 0}$-module, we have
$M=U(\mathcal{L}^+). M_{\leq e}$. Since in addition
$\mathcal{L}^+$ is generated by
$\oplus_{1\leq k\leq d}\,\mathcal{L}_k$, we have

\begin{align*}M_n=\sum_{1\leq k\leq d}\, 
\mathcal{L}_k. M_{n-k},
\end{align*}
\noindent for any $n>e$. It follows easily that 
the $\mathcal{L}^+$-module
 $M_{\geq 0}$ is finitely generated.
The proof of the finite generation of  the  $\mathcal{L}^-$-module $M_{\leq 0}$  is identical. 
\end{proof}

\subsection{\it  Finite and intermediate growth}

\

\medbreak
\noindent A $\Z$-graded vector space $M$ is
called of {\it finite growth} if the function
\begin{align*}n\mapsto \dim\,M_n
\end{align*}
\noindent is bounded by a polynomial.
It is called of {\it intermediate growth} if both limits
\begin{align*}
&\limsup \frac{\log^+(\dim M_n)}{ n}&  &\text{and}
&&\limsup \frac{\log^+(\dim M_{-n})} {n}
\end{align*}
 are  zero, where the function $\log^+$ is 
defined by  $\log^+(x)=\log(x)$ if $x\geq 1$ and $\log^+(x)=0$
otherwise. The  formal series

\begin{align*}
\chi_M^{\pm}(z)\coloneqq \sum_{n\geq 0}\,\dim\,M_{\pm n}\,z^n 
\end{align*}
 are called the {\it two generating series} of $M$. Equivalently, $M$ has intermediate growth iff  both series 
$\chi_M^{+}(z)$ and $\chi_M^{-}(z)$ 
are convergent for $\vert z\vert<1$.

\medbreak
Assume now that $M=\oplus_{n\geq 1}\, M_n$ is a positively graded
vector space. Then the symmetric algebra $S(M)$ is a nonnegatively graded vector space.
The following lemma is well-known.

\begin{lemma}\label{interS} Assume that the positively graded vector space
$M$ has intermediate growth. Then $S(M)$ also has  intermediate growth.
\end{lemma}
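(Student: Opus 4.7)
The plan is to exploit the equivalent characterization of intermediate growth given earlier: a nonnegatively graded vector space $N$ has intermediate growth if and only if its generating series $\chi_N(z)=\sum_{n\ge 0}\dim N_n\,z^n$ converges on the open unit disk $|z|<1$. One direction was recorded in the excerpt; the other follows because $\chi_N(r)<\infty$ for every $0<r<1$ forces $\dim N_n\le C_r r^{-n}$, whence $\limsup \tfrac{\log^+\dim N_n}{n}\le \log(1/r)$ and, as $r\to 1^-$, this $\limsup$ is $0$. So the lemma reduces to showing that $\chi_{S(M)}(z)$ converges on $|z|<1$ whenever $\chi_M(z)$ does.

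Next, I would write down the standard product formula for the graded character of a symmetric algebra on a positively graded space:
\begin{equation*}
\chi_{S(M)}(z) \;=\; \prod_{n\ge 1}\,(1-z^n)^{-\dim M_n}.
\end{equation*}
(Each homogeneous piece $M_n$ contributes $\dim M_n$ polynomial generators in degree $n$.) For $0\le z<1$ all factors are positive, so convergence of the product is equivalent to convergence of its logarithm
\begin{equation*}
\log\chi_{S(M)}(z) \;=\; \sum_{n\ge 1}\dim M_n\cdot\bigl(-\log(1-z^n)\bigr).
\end{equation*}

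The key estimate is the elementary inequality $-\log(1-y)\le \tfrac{y}{1-y}$ for $0\le y<1$, applied to $y=z^n$ and followed by the trivial bound $1-z^n\ge 1-z$:
\begin{equation*}
-\log(1-z^n)\;\le\;\frac{z^n}{1-z^n}\;\le\;\frac{z^n}{1-z}.
\end{equation*}
Summing against $\dim M_n$ yields
\begin{equation*}
\log\chi_{S(M)}(z)\;\le\;\frac{1}{1-z}\sum_{n\ge 1}\dim M_n\,z^n\;=\;\frac{\chi_M(z)}{1-z},
\end{equation*}
which is finite for every $0\le z<1$ by the intermediate-growth hypothesis on $M$. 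Since $\chi_{S(M)}$ has nonnegative coefficients, convergence on the positive reals with $|z|<1$ implies absolute convergence on the whole disk $|z|<1$, and the characterization recalled above then gives intermediate growth of $S(M)$.

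There is no real obstacle here; the argument is essentially an exercise in manipulating generating series. The only point to double-check is the equivalence between intermediate growth and convergence of $\chi(z)$ on the open unit disk, which is a direct Cauchy–Hadamard computation and can be recorded as a one-line remark before the main estimate. Nothing in the argument uses that $M$ arose from a Lie-theoretic construction.
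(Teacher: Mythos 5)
Your argument is correct and follows the same route as the paper: both proofs pass to the generating series, invoke the product formula $\chi^+_{S(M)}(z)=\prod_{n\ge1}(1-z^n)^{-\dim M_n}$, and reduce the lemma to the fact that this product converges on $|z|<1$ whenever $\chi^+_M$ does. The paper simply asserts that ``these series have the same radius of convergence,'' whereas you supply the missing justification via the bound $-\log(1-z^n)\le z^n/(1-z)$, which gives $\log\chi^+_{S(M)}(z)\le \chi^+_M(z)/(1-z)$; this is exactly the standard estimate one would cite to back up the paper's one-line claim, so the approach is the same, only more explicit.
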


\begin{proof} For any integer $n\geq 1$, set $a_n=\dim \, M_n$.
We have

\begin{align*}
\chi^+_{M}(z) &= \sum_{n\geq 1} \, a_n z^n, &
\chi^+_{S(M)}(z) &= \prod_{n\geq 1} \, \frac{1}{(1-z^n)^{a_n}}.
\end{align*}

\noindent If $M$ is finite-dimensional, $S(M)$ has finite growth. Otherwise, the lemma follows because 
these  series have the same radius of convergence.
\end{proof}

\subsection{\it Growth of \texorpdfstring{$\Z$}{}-graded \texorpdfstring{$\mathcal{L}$}{}-modules}

\

\medbreak
\noindent
Let $\mathcal{L}$ be a $\Z$-graded Lie algebra and
let $M$ be a $\Z$-graded $\mathcal{L}$-module.
For any $n$, let $M_n^{\intt}$ be the subspace of 
all $m\in M_n$ such that $U(\mathcal{L}^+). m$ 
has intermediate growth. Set
$M^{\intt}=\oplus_{n\in\Z} M_n^{\intt}$.

\begin{lemma}\label{submodule} The subspace $M^{\intt}$ is a $\mathcal{L}$-submodule.
\end{lemma}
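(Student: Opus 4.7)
The plan is to prove, by induction on $r \geq 0$, the following strengthening of the lemma:

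\smallbreak
\noindent
$(\star_r)$\, \emph{For every homogeneous $y \in \mathcal{L}_j$ with $j \geq -r$ and every graded $\mathcal{L}^+$-submodule $N \subset M$ of intermediate growth, $U(\mathcal{L}^+).(y.N)$ has intermediate growth.}

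\smallbreak
\noindent
Once $(\star_r)$ is available for every $r$, the lemma follows by taking $N = U(\mathcal{L}^+).m$ for $m \in M^{\intt}$ and $r = |\deg y|$: since $U(\mathcal{L}^+).(y.m) \subset U(\mathcal{L}^+).(y.N)$, we conclude $y.m \in M^{\intt}$.

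The backbone computation is the commutation identity, valid for any PBW monomial $u = e_{i_1} \cdots e_{i_p} \in U(\mathcal{L}^+)$ (in a chosen homogeneous basis of $\mathcal{L}^+$) and any $n \in N$:
\begin{align*}
u.(y.n) = y.(u.n) + \sum_{\ell=1}^{p} e_{i_1}\cdots e_{i_{\ell-1}}\cdot [e_{i_\ell},y]\cdot e_{i_{\ell+1}}\cdots e_{i_p}.n.
\end{align*}
In the base case $j \geq 0$, every $[e_{i_\ell}, y]$ lies in $\mathcal{L}_{\deg e_{i_\ell} + j} \subset \mathcal{L}^+$, so each summand on the right acts on $n$ through $U(\mathcal{L}^+)$; hence $U(\mathcal{L}^+).(y.N) \subset y.N + N$, which has intermediate growth because $\dim (y.N)_k \leq \dim N_{k-j}$ and $N$ does.

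For the inductive step, take $j = -r < 0$. The bracket $[e_{i_\ell},y]$ belongs to $\mathcal{L}^+$ when $\deg e_{i_\ell} > r$, and otherwise to the finite-dimensional subspace $\Sigma \coloneqq \sum_{1 \leq s \leq r}[\mathcal{L}_s, y] \subset \mathcal{L}_{\geq -r+1}$. Separating the right-hand side of the identity according to this dichotomy and letting $f$ range over a basis of $\Sigma$ yields
\begin{align*}
U(\mathcal{L}^+).(y.N) \subset y.N + N + \sum_{f} U(\mathcal{L}^+).(f.N).
\end{align*}
Each such $f$ has $\deg f > -r$, so the inductive hypothesis supplies intermediate growth for every $U(\mathcal{L}^+).(f.N)$; a finite sum of intermediate-growth submodules has intermediate growth, and the step is done.

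The main obstacle lies precisely in this negative-degree case. A direct single-element argument (``$m \in M^{\intt} \Rightarrow y.m \in M^{\intt}$'') does not self-improve under iteration: when $\deg y < 0$, commuting $y$ past $u$ via PBW produces brackets $[e_{i_\ell}, y]$ of arbitrary sign that act on the entire orbit $N$, not merely on the generator $m$. The strengthening from cyclic $\mathcal{L}^+$-submodules to arbitrary graded intermediate-growth $\mathcal{L}^+$-submodules is exactly what permits the induction to close.
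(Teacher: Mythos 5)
Your proof is correct, but it takes a more laborious route than the paper's, which dispenses with induction entirely. The paper's key observation is that, for $u$ homogeneous of degree $d\le 0$, one has $u\in\Lc_{\ge d}$ and the subalgebra $\Lc_{\ge d}$ is stable under $\ad(\Lc^+)$; commuting at the level of algebras therefore gives
\begin{align*}
U(\mathcal{L}^+)u\ \subset\ U(\mathcal{L}^+)\mathcal{L}_{\geq d}
\ =\ \mathcal{L}_{\geq d}U(\mathcal{L}^+)
\ \subset\ U(\mathcal{L}^+)+\sum_{d\leq k\leq 0}\mathcal{L}_{k}\,U(\mathcal{L}^+),
\end{align*}
and applying this to $v\in M^{\intt}$ yields $U(\mathcal{L}^+)u.v\subset U(\mathcal{L}^+).v+\sum_{d\leq k\leq 0}\mathcal{L}_{k}U(\mathcal{L}^+).v$, a finite sum of finite-dimensional shifts of the intermediate-growth space $U(\mathcal{L}^+).v$ --- done in a single step. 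You instead commute the single element $y$ past a PBW monomial, which produces commutators $[e_{i_\ell},y]$ of arbitrary degree, and the strengthened hypothesis $(\star_r)$, quantified over all intermediate-growth graded $\Lc^+$-submodules rather than over a single cyclic one, is exactly what allows your induction on $|\deg y|$ to close. Your diagnosis of the ``main obstacle'' correctly explains why the naive single-element commutation fails to self-improve; the paper, however, sidesteps that obstacle not by induction but by commuting the whole subalgebra $\Lc_{\ge d}$ (rather than $u$ alone) past $U(\Lc^+)$: the commutators then stay inside $\Lc_{\ge d}$, and the non-positive window $\bigoplus_{d\le k\le 0}\Lc_k$ is finite-dimensional, so neither the induction nor the strengthening is needed.
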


\begin{proof} Since $M^{\intt}$  is clearly  a 
$\mathcal{L}^+$-module, it is enough to show that
for any homogeneous elements $u\in \mathcal{L}$ of degree $d\leq 0$
and $v\in M^{\intt}$,   $u . v$ belongs to  $M^{\intt}$.
First note that
\begin{align*}
U(\mathcal{L}^+)u\subset U(\mathcal{L}^+)\mathcal{L}_{\geq d}
=\mathcal{L}_{\geq d}U(\mathcal{L}^+)
=U(\mathcal{L}^+)\oplus\oplus_{d\leq k\leq 0}\, \mathcal{L}_{k}U(\mathcal{L}^+). 
\end{align*}
\noindent Therefore we have 
\begin{align*}
U(\mathcal{L}^+)u.v \subset U(\mathcal{L}^+).v+
\sum_{d\leq k\leq 0}\, \mathcal{L}_{k}U(\mathcal{L}^+).v.
\end{align*}
Thus $U(\mathcal{L}^+)u.v$ has intermediate growth, i.e. $u . v$ belongs to $M^{\intt}$. 
\end{proof}

\begin{lemma}\label{interM} Let $\mathcal{L}$ be a finitely generated 
$\Z$-graded Lie algebra and let $M$ be a simple $\Z$-graded $\mathcal{L}$-module.
Assume that, for some homogeneous $v\in M\setminus 0$, the vector space $\mathcal{L}. v$ has intermediate growth.
Then $M$ has intermediate growth.
\end{lemma}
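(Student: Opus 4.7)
The plan is to reduce to showing $v \in M^{\intt}$: by Lemma \ref{submodule} and the simplicity of $M$, this forces $M^{\intt} = M$, and then, since $M = U(\mathcal{L}).v$ is cyclic, Lemma \ref{fg} produces finitely many homogeneous generators $w_1,\dots,w_s$ of $M_{\geq 0}$ as an $\mathcal{L}^+$-module, each of which lies in $M^{\intt}$; hence each $U(\mathcal{L}^+).w_i$ has intermediate growth and so does $M_{\geq 0} = \sum_i U(\mathcal{L}^+).w_i$. Running the mirror argument with $\mathcal{L}^-$ in place of $\mathcal{L}^+$ (and the obvious variant of Lemma \ref{submodule}) controls $M_{\leq 0}$, and together these give the intermediate growth of $M$. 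The main obstacle is therefore the bootstrap from the intermediate growth of the linear object $\mathcal{L}.v$ to that of the enveloping object $U(\mathcal{L}^+).v$.

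For this bootstrap, I would first observe that $\mathcal{A}^+ := \{x \in \mathcal{L}^+ : x.v = 0\}$ is a graded Lie subalgebra of $\mathcal{L}^+$, since for $a, a' \in \mathcal{A}^+$ one has $[a,a'].v = a.(a'.v) - a'.(a.v) = 0$. The map $x \mapsto x.v$ then induces, up to a shift by $\deg v$, a graded isomorphism $\mathcal{L}^+/\mathcal{A}^+ \cong \mathcal{L}^+.v$; as the right-hand side is a graded subspace of $\mathcal{L}.v$, it has intermediate growth by hypothesis. Fix a graded vector space complement $\mathcal{C}^+$ of $\mathcal{A}^+$ in $\mathcal{L}^+$, so that $\mathcal{C}^+ \cong \mathcal{L}^+/\mathcal{A}^+$ also has intermediate growth.

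Using PBW with an ordering that lists a basis of $\mathcal{C}^+$ before a basis of $\mathcal{A}^+$, one obtains a graded vector space isomorphism $U(\mathcal{L}^+) \cong S(\mathcal{C}^+) \otimes U(\mathcal{A}^+)$ under which the left ideal $U(\mathcal{L}^+)\mathcal{A}^+$ corresponds to $S(\mathcal{C}^+) \otimes U(\mathcal{A}^+)_+$, with $U(\mathcal{A}^+)_+$ the augmentation ideal; hence
\[
U(\mathcal{L}^+)/U(\mathcal{L}^+)\mathcal{A}^+ \;\cong\; S(\mathcal{C}^+)
\]
as graded vector spaces. Lemma \ref{interS} gives that $S(\mathcal{C}^+)$ has intermediate growth, and since $\mathcal{A}^+$ annihilates $v$, the module $U(\mathcal{L}^+).v$ is a quotient of the left-hand side above and so also has intermediate growth. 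In other words $v \in M^{\intt}$, which feeds into the plan of the first paragraph to finish the proof.
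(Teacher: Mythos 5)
Your proof is correct and follows essentially the same route as the paper: identify the annihilator subalgebra $\Kc^+=\{x\in\mathcal L^+\mid x.v=0\}$, use PBW to see that $U(\mathcal L^+).v$ is a quotient of a graded space isomorphic to $S(\mathcal L^+/\Kc^+)$ (in the paper this is phrased as $\Ind_{\Kc^+}^{\mathcal L^+}Kv\cong S(\mathcal L^+/\Kc^+)$), invoke Lemma \ref{interS}, then use Lemma \ref{submodule} plus simplicity to get $M^{\intt}=M$, and finish with Lemma \ref{fg} and the mirror argument on the negative side. The only cosmetic difference is that you spell out the PBW decomposition $U(\mathcal L^+)\cong S(\mathcal C^+)\otimes U(\Kc^+)$ explicitly where the paper simply cites the graded isomorphism of the induced module with the symmetric algebra.
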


\begin{proof} Let $\Kc^+=\{x\in \mathcal{L}^+\mid x.v=0\}$.
As a graded space, the ${\mathcal{L}^+}$-module 
$\Ind_{\Kc^+}^{\mathcal{L}^+} Kv$ is isomorphic to 
$S({\mathcal{L}^+}/\Kc^+)$. By Lemma \ref{interS},
$\Ind_{\Kc^+}^{\mathcal{L}^+} Kv$ has in\-ter\-me\-diate growth. 
Thus $U(\mathcal{L}^+). v$, a quotient of 
$\Ind_{\Kc^+}^{\mathcal{L}^+} Kv$, 
 has intermediate growth too.

Since $M$ is simple, from  Lemma \ref{submodule} we infer  that any cyclic 
$U(\mathcal{L}^+)$-sub\-module of $M$ has intermediate growth. 
Now the $\mathcal{L}^+$-module $M_{\geq 0}$ is finitely generated 
by Lemma \ref{fg},  hence $M_{\geq 0}$ has intermediate growth. 
 Similarly $M_{\leq 0}$ has intermediate growth; therefore $M$
 has intermediate growth.
\end{proof}

\section{Rank one Lie algebras of class \texorpdfstring{$\Vs$}{}}\label{V}

\noindent We define first the general notions of roots and rank
of a $\Z$-graded Lie algebra $\Lc$.  Then we split the proof that $U(\mathcal{L})$  is not Noetherian into three parts:
Lie algebras  of class $\Vs$ are treated in this section;
the next section \ref{S} is devoted to  class 
$\Ssr$; the last section \ref{rk2} deals with the
Lie algebras  of rank $\geq 2$.

\subsection{\it  Roots and rank}

\

\medbreak
\noindent
Let $\mathcal{L}=\oplus_{n\in\Z} \Lc_{n}$ be a $\Z$-graded Lie algebra.  
We fix, once and for all,  a Cartan subalgebra $\fh$ of $\mathcal{L}_0$, 
i.e.,  $\fh$ is a nilpotent self-normalizing subalgebra of $\mathcal{L}_0$
\cite{Bourbaki}. For any $\alpha\in\fh^*$ and any $n\in\Z$, we set
\begin{align*}
\mathcal{L}_n^\alpha=
\{x\in \mathcal{L}_n\mid \, (\ad(h)-\alpha(h))^N(x) = 0 \quad
\forall h\in\fh \text{ and } N\gg 0\}.
\end{align*}
 Also, $\mathcal{L}^{\widetilde\alpha} \coloneqq \mathcal{L}^{\alpha}_n$
 for $\widetilde\alpha=(\alpha,n)\in \fh^*\times\Z$.
The {\it set of roots}  of 
$\mathcal{L}$ is the set
$\Delta\coloneqq \{\widetilde{\alpha}\mid \mathcal{L}^{\widetilde\alpha}\neq 0\}$
(with our nonstandard definition, $(0,0)$ is a root
whenever ${\mathcal L}_0\neq 0$).
Therefore 
\begin{align*}
\mathcal{L}
=\oplus_{\widetilde{\alpha}\in\Delta}\,\mathcal{L}^{\widetilde\alpha}
\end{align*}
 is the generalized root space decomposition
of $\mathcal{L}$. 

\smallbreak
A root $\widetilde{\alpha}=(\alpha,n)$ is called
{\it real} if $\alpha\neq 0$ and  {\it imaginary} otherwise. Let $\Delta_{\operatorname{re}}$, respectively
$\Delta_{\operatorname{im}}$, be the set of real, respectively imaginary, roots.

\smallbreak
The {\it root lattice} is the subgroup
$Q\subset \fh^*\times \Z$ generated by $\Delta$. 
By definition the {\it rank} of ${\mathcal L}$ is the rank of $Q$. 

\smallskip
\begin{remark}
It is proved in \cite{Bourbaki} that any two Cartan subalgebras of ${\mathcal L}_0$ are conjugated by an automorphism of $\mathcal{L}_0$. In fact the proof of
\cite{Bourbaki} shows that they  are conjugated by a degree-preserving automorphism of ${\mathcal L}$. Therefore the root lattice is independent of the choice of a Cartan subalgebra of ${\mathcal L}_0$. Since we do not need
this fact, we will not provide more details.
\end{remark}

\subsection{\it  Rank one Lie algebras}

\

\medbreak
\noindent
Let
$\mathcal{L}=\oplus_{n\in\Z} \Lc_{n}$  be a 
$\Z$-graded Lie algebra of rank one. Assume that
${\mathcal L}\neq {\mathcal L}_0$. Then 
${\mathcal L}_0=\fh$ is a nilpotent Lie algebra and there exists
$\widetilde\alpha=(\alpha,1)\in{\mathcal L}_0^*\times\Z$ such that 
$\Delta$ lies in $\Z.\widetilde\alpha$. 
We  keep the terminology of \cite{Mathieu-inv2}.
When $\alpha=0$ or, equivalently, when the set
of real roots is void, 
we say that $\mathcal{L}$ belongs to the class 
$\Vs$ (for the class $\Vs$,  the Lie algebra $\Lc_0$ could be $0$). Otherwise, we say that 
$\mathcal{L}$ belongs to the class $\Ssr$.
Here the letter $\Ssr$ stands for string, because,
roughly speaking, all real roots are on a ``string''.

\subsection{\it  Rank one Lie algebras of class \texorpdfstring{$\Vs$}{}}

\

\medbreak
\noindent
This case follows easily from the next result.

\begin{lemma}\label{nfg} \cite[Lemma 22]{Mathieu-jalg}
Let $\mathcal{L}$ be a $\Z$-graded Lie algebra of class 
$\Vs$. If $\mathcal{L}=[\mathcal{L},\mathcal{L}]$,  then 
$\mathcal{L}$ is not finitely generated.
\qed
\end{lemma}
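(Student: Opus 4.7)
My plan is to argue by contradiction: assume $\mathcal{L}$ is finitely generated and show that $\mathcal{L} \supsetneq [\mathcal{L},\mathcal{L}]$, contrary to hypothesis. By Lemma~\ref{fg}, both $\mathcal{L}^+$ and $\mathcal{L}^-$ are then finitely generated subalgebras; fix integers $D, E \geq 1$ so that $\mathcal{L}^+$ is generated by $\bigoplus_{1 \leq k \leq D}\mathcal{L}_k$ and $\mathcal{L}^-$ by $\bigoplus_{1 \leq k \leq E}\mathcal{L}_{-k}$. Replacing the grading by its opposite if needed, assume $\mathcal{L}^+ \neq 0$, and let $d \geq 1$ be the smallest degree with $\mathcal{L}_d \neq 0$.

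Perfectness in degree $d$ reads $\mathcal{L}_d = [\mathcal{L},\mathcal{L}]_d$. Expanding and using minimality of $d$ to kill the purely positive contributions $[\mathcal{L}_a,\mathcal{L}_b]$ with $a,b > 0$ and $a+b = d$, one obtains
\[
\mathcal{L}_d \;=\; [\fh, \mathcal{L}_d] \;+\; \sum_{k \geq 1} \bigl[\mathcal{L}_{d+k}, \mathcal{L}_{-k}\bigr], \qquad \fh := \mathcal{L}_0.
\]
A Jacobi-identity expansion, exploiting that $\mathcal{L}_{-k}$ for $k > E$ is an iterated Lie bracket of the degree-$(-1),\ldots,-E$ generators of $\mathcal{L}^-$, rewrites each $[\mathcal{L}_{d+k}, \mathcal{L}_{-k}]$ recursively in terms of $[\mathcal{L}_{d+k'}, \mathcal{L}_{-k'}]$ with $k' < k$, ultimately truncating the sum to $1 \leq k \leq E$. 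If $\mathcal{L}^- = 0$ the sum is empty and Engel's theorem (applicable by class~$\Vs$) gives $[\fh, \mathcal{L}_d] \subsetneq \mathcal{L}_d$, an immediate contradiction; so assume $\mathcal{L}^- \neq 0$.

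The core step is to show that the finite sum $[\fh, \mathcal{L}_d] + \sum_{k=1}^{E} [\mathcal{L}_{d+k}, \mathcal{L}_{-k}]$ does not exhaust $\mathcal{L}_d$. This is where class~$\Vs$ is essential: $\fh$ is a finite-dimensional nilpotent Lie algebra whose adjoint action on every $\mathcal{L}_n$ is nilpotent. My approach is to introduce the Engel filtration $F_i := \Ker\bigl((\ad\fh)^i : \mathcal{L} \to \mathcal{L}\bigr)$, an ascending filtration by graded subspaces that satisfies $[F_i, F_j] \subseteq F_{i+j}$ and exhausts $\mathcal{L}$. On the associated graded $\gr\mathcal{L}$ the $\fh$-action becomes trivial, so $\gr\mathcal{L}_0$ is central in $\gr\mathcal{L}$; in that reduced setting the mixed brackets $[\mathcal{L}^+, \mathcal{L}^-]$ vanish in degree $0$ modulo the centre, and a minimal-degree argument in the quotient $\gr\mathcal{L}/\gr\mathcal{L}_0$ yields a non-zero functional on $\mathcal{L}_d$ vanishing on $[\fh, \mathcal{L}_d] + \sum_k [\mathcal{L}_{d+k}, \mathcal{L}_{-k}]$, giving the desired contradiction.

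The main obstacle is transferring perfectness from $\mathcal{L}$ to $\gr\mathcal{L}$: an equation $x = \sum_k [y_k, z_k]$ may involve individual commutators of strictly larger filtration depth than $x$, whose leading depth-graded pieces cancel, so $\gr x$ is not automatically a bracket in $\gr\mathcal{L}$. Managing this cancellation --- either by choosing commutator expressions of minimal total filtration depth, or by directly analysing the induced bracket pairings on the $\fh$-coinvariants $(\mathcal{L}_n)_\fh$ and tracking their images in $(\mathcal{L}_d)_\fh$ --- is the delicate technical heart of the lemma, and is precisely where Engel's theorem (hence class~$\Vs$) does the key work.
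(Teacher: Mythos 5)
The paper itself gives no proof of Lemma~\ref{nfg}; it is cited directly from \cite[Lemma 22]{Mathieu-jalg} with a \verb|\qed|, so there is no in-paper argument to compare against. Judged on its own terms, your attempt is set up sensibly but is incomplete at exactly the point where the content lies.

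The reduction steps are fine. Passing to the opposite grading so that $\mathcal{L}^{+}\neq0$, taking $d$ minimal with $\mathcal{L}_{d}\neq0$, and writing $\mathcal{L}_{d}=[\fh,\mathcal{L}_{d}]+\sum_{k\geq1}[\mathcal{L}_{d+k},\mathcal{L}_{-k}]$ is correct. The Jacobi/induction argument truncating the sum to $1\leq k\leq E$ using a finite generating set of $\mathcal{L}^{-}$ also works. The case $\mathcal{L}^{-}=0$ is handled correctly: $\fh$ acts on the finite-dimensional space $\mathcal{L}_{d}$ by nilpotent operators (this is what class~$\Vs$ gives), so $[\fh,\mathcal{L}_{d}]\subsetneq\mathcal{L}_{d}$ and one gets the contradiction.

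The genuine gap is the case $\mathcal{L}^{-}\neq0$, and you in fact flag it yourself (``is precisely where Engel's theorem \dots does the key work''). The Engel filtration $F_{i}=\Ker\bigl((\ad\fh)^{i}\bigr)$ is indeed a Lie filtration with $[\fh,F_{b}]\subset F_{b-1}$, so $\gr\fh$ is central in $\gr\mathcal{L}$ and $[\fh,\mathcal{L}_{d}]$ dies in $\gr\mathcal{L}/\gr\fh$. But this does not make the obstruction disappear: the terms $[\mathcal{L}_{d+k},\mathcal{L}_{-k}]$ have $\Z$-degree $d>0$, so they do \emph{not} land in $\gr\fh$, and in the quotient $\bar{\mathcal{L}}:=\gr\mathcal{L}/\gr\fh$ you are left needing $\bar{\mathcal{L}}_{d}\neq\sum_{k}[\bar{\mathcal{L}}_{d+k},\bar{\mathcal{L}}_{-k}]$ with no argument for it. Worse, passing to the associated graded forfeits both the truncation to $1\leq k\leq E$ (that used generation of $\mathcal{L}^{-}$, which need not survive in $\gr$) and, as you note, the relation $\mathcal{L}_{d}=[\fh,\mathcal{L}_{d}]+\sum_{k}[\mathcal{L}_{d+k},\mathcal{L}_{-k}]$ itself, since leading filtration pieces of the summands can cancel. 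So the route you sketch stalls before producing the desired nonzero functional on $\mathcal{L}_{d}$. As written, this is a plan plus an honest admission that the hard step is open; it is not a proof of the lemma.
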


\begin{cor}\label{corV} Let $\mathcal{L}$ be a simple 
$\Z$-graded Lie algebra of
class $\Vs$. Then $U(\mathcal{L})$ is not Noetherian.
\end{cor}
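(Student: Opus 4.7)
The plan is to proceed by contradiction, assuming that $U(\mathcal{L})$ is Noetherian, and then reducing to Lemma \ref{nfg}. The only consequence of the Noetherianity of $U(\mathcal{L})$ that I will need is extracted from Lemma \ref{basic}\ref{fp}: under that hypothesis $\mathcal{L}$ is finitely presented, and in particular finitely generated.

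Next I verify that the simplicity of $\mathcal{L}$ as a $\Z$-graded Lie algebra forces $\mathcal{L} = [\mathcal{L},\mathcal{L}]$. The derived subalgebra $[\mathcal{L},\mathcal{L}]$ is a $\Z$-graded ideal, so by simplicity it is either $0$ or $\mathcal{L}$. If it were $0$, then $\mathcal{L}$ would be abelian, in which case every $\Z$-graded subspace is a $\Z$-graded ideal; the requirement $\dim \mathcal{L} \geq 2$ built into the definition of simple $\Z$-graded Lie algebra would then immediately produce a nonzero proper $\Z$-graded ideal---either by splitting off one of at least two nonzero homogeneous components, or by selecting a proper subspace inside a single component of dimension $\geq 2$. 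This contradicts simplicity, so indeed $\mathcal{L} = [\mathcal{L},\mathcal{L}]$.

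At this stage, Lemma \ref{nfg} applies directly: $\mathcal{L}$ is a $\Z$-graded Lie algebra of class $\Vs$ satisfying $\mathcal{L} = [\mathcal{L},\mathcal{L}]$, hence it is not finitely generated. This contradicts the finite generation obtained in the first paragraph, completing the argument. I do not foresee any real obstacle, since the genuine difficulty has been packaged into Lemma \ref{nfg}, whose proof lives in \cite{Mathieu-jalg}; the present corollary is essentially a short deduction from that lemma together with the finite-generation consequence of Noetherianity.
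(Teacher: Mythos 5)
Your proof is correct and follows exactly the route of the paper, which disposes of this corollary in one line as "Immediate from Lemmas \ref{basic}\ref{fp} and \ref{nfg}." You have simply made explicit the two small points the paper leaves implicit, namely that finite presentation entails finite generation and that simplicity forces $\mathcal{L}=[\mathcal{L},\mathcal{L}]$.
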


\begin{proof} Immediate from Lemmas  \ref{basic}\ref{fp} and \ref{nfg}.
\end{proof}

\section{Rank one Lie algebras of class \texorpdfstring{$\Ssr$}{}}\label{S}

\noindent The case of Lie algebras of class 
$\Ssr$ is more difficult than the previous one.
Recall that a $\Z$-graded Lie algebra $\mathcal{L}$ belongs to $\Ssr$ if ${\mathcal L}\neq {\mathcal L}_0$, 
${\mathcal L}_0$ is nilpotent and
there exists 
a nonzero $\alpha\in{\mathcal L}_0^*$ such that 
$\mathcal{L}_n=\mathcal{L}_n^{n\alpha}$ for any $n\in\Z$.

\medbreak
The main step is Theorem \ref{thmS}, which is implicit in \cite{Mathieu-inv2}. 
Navigating through chapters 7 and 8 of {\it loc. cit.} is not easy. Thus
for the sake of the reader, we rewrite parts of those  in a  convenient way.

\smallbreak
We need the following definition.
For  $n\neq 0$, let $\mathcal{L}\{n\}$ be the Lie algebra
$\mathcal{L}$ endowed with a grading rescaled by
a factor of $n$, i.e. we have
\begin{align*}
\mathcal{L}\{n\}_{nk} &= \Lc_{k}, \quad k \in\Z,&
\mathcal{L}\{n\}_{m} &= 0 \text{ if } n \not\vert m. 
\end{align*}
The $\Z$-graded Lie algebra 
$\mathcal{L}\{n\}$,  again in class 
$\Ssr$,  is called a {\it rescaling of 
$\mathcal{L}$}.

\subsection{\it  Local Lie algebras}

\

\medbreak
\noindent Let $P$ be the set of pairs of integers $(i,j)$ with
$i,j, i+j\in\{-1,0,1\}$. Following \cite{Kac}, see  \cite[Exercise 1.8, p. 13]{Kac-libro},
a {\it local Lie algebra} is a graded vector space 
\begin{align*}
G=G_{-1}\oplus G_0\oplus G_1
\end{align*}
endowed with a degree preserving  bracket $[\,, \,]$ which is  defined only
on $\cup_{(i,j)\in P}\, G_i\times G_j$ and which satisfies
the Jacobi identity whenever it makes sense. Equivalently,
this means that $G_0$ is a Lie algebra,
$G_1$ and $G_{-1}$ are $G_0$-modules and the bracket
$[,]:G_{-1}\times G_1\to G_0$ is $G_0$-equivariant.

\medbreak
The notions of morphisms between local Lie algebras, local
Lie subalgebras and local ideals are defined in an evident way.
Analogously a local Lie algebra $S$ is a called a {\it section}  of $G$
if $S$ is isomorphic to $H/K$ for some local subalgebra $H\subset G$ 
and some local ideal $K$ of $H$.

\medbreak
Given a $\Z$-graded Lie algebra $\mathcal{L}$, its 
{\it local part} 
\begin{align*}
\mathcal{L}_{loc}:
=\mathcal{L}_{-1}\oplus\mathcal{L}_0\oplus\mathcal{L}_1
\end{align*}
is evidently a local Lie algebra. Conversely, given a local
Lie algebra $G$ there  are $\Z$-graded Lie algebras
whose local part is $G$. One of them,
denoted by $\mathcal{L}_{\max}(G)$, is defined as follows.
As a vector space we have 
\begin{align*}
\mathcal{L}_{\max}(G)=\Free(G_{-1})\oplus G_0\oplus \Free(G_1)
\end{align*}
 where $\Free(G_{\pm1})$ denotes the free Lie algebra on the vector space $G_{\pm1}$. 
 Then the  local Lie bracket and the $\Z$-grading extend uniquely  to 
$\mathcal{L}_{\max}(G)$ \cite{Kac}. 
Indeed the functor $G\to \mathcal{L}_{\max}(G)$ is the left adjoint of the functor $\mathcal{L}\to\mathcal{L}_{loc}$ \cite{Mathieu-inv1}. Let 
$\mathcal{I}$ be the largest
graded ideal of $\mathcal{L}_{\max}(G)$ such that
$\mathcal{I} \cap G = 0$ and set
\begin{align*}\mathcal{L}_{\min}(G)=\mathcal{L}_{\max}(G)/\mathcal{I}.
\end{align*}
Notice that, if $\mathcal{L}$ is a
Lie $\Z$-graded Lie algebra  which is generated by its local part $G$,  
then there are natural epimorphisms
\begin{align*}\mathcal{L}_{\max}(G)& \twoheadrightarrow\mathcal{L}
&&\text{ and }&\mathcal{L} &\twoheadrightarrow \mathcal{L}_{\min}(G),
\end{align*}

\noindent so $\mathcal{L}$ is between the Lie algebras
$\mathcal{L}_{\max}(G)$ and $\mathcal{L}_{\min}(G)$. We conclude:

\begin{lemma}\label{local} Let $G$ be a local Lie algebra and let
$\mathcal{L}$ be a $\Z$-graded Lie algebra.
If $G$ is a section of $\mathcal{L}_{loc}$, then
$\mathcal{L}_{\min}(G)$ is a section of $\mathcal{L}$. \qed
\end{lemma}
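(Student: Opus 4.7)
The plan is to realize $\mathcal{L}_{\min}(G)$ as a quotient of the graded Lie subalgebra of $\mathcal{L}$ generated by a local lift of $G$. Fix a local subalgebra $H\subseteq\mathcal{L}_{loc}$ and a local ideal $K\trianglelefteq H$ with $G\cong H/K$, and let $\mathcal{M}$ be the graded Lie subalgebra of $\mathcal{L}$ generated by $H$. Since $H$ lives in degrees $-1,0,1$ and is closed under the local brackets of $\mathcal{L}_{loc}$, one has $\mathcal{M}_{loc}=H$: the degree-$i$ component of $\mathcal{M}$ for $i\in\{-1,0,1\}$ is exhausted by $H_i$, as can be checked by using Jacobi to reduce any iterated bracket to a local one.

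By the universal property of $\mathcal{L}_{\max}$ (left adjoint to $\mathcal{L}\mapsto\mathcal{L}_{loc}$), the embedding $H\hookrightarrow\mathcal{M}$ extends to a graded surjection $p\colon\mathcal{L}_{\max}(H)\twoheadrightarrow\mathcal{M}$ with $\ker p\cap H=0$. Functoriality of $\mathcal{L}_{\max}$ applied to the local epimorphism $H\twoheadrightarrow G$ produces a graded surjection $\pi\colon\mathcal{L}_{\max}(H)\twoheadrightarrow\mathcal{L}_{\max}(G)$ that restricts to $H\twoheadrightarrow G$. Let $\mathcal{I}_G\subseteq\mathcal{L}_{\max}(G)$ be the maximal graded ideal with $\mathcal{I}_G\cap G=0$, so $\mathcal{L}_{\min}(G)=\mathcal{L}_{\max}(G)/\mathcal{I}_G$, and set $N:=\pi^{-1}(\mathcal{I}_G)$; then $N\cap H=K$ and $\mathcal{L}_{\max}(H)/N\cong\mathcal{L}_{\min}(G)$.

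The crux, which I expect to be the main obstacle, is to verify $\ker p\subseteq N$; only this inclusion lets $\mathcal{L}_{\min}(G)$ descend to a quotient of $\mathcal{M}$ itself rather than of some further covering of it. I would argue that $\pi(\ker p)$ is a graded ideal of $\mathcal{L}_{\max}(G)$ meeting $G$ trivially: any homogeneous $g\in G\cap\pi(\ker p)$ has degree $i\in\{-1,0,1\}$ and, since $\pi$ is graded, is the $\pi$-image of a homogeneous $y\in\ker p$ of the same degree $i$; but the degree-$i$ component of $\mathcal{L}_{\max}(H)$ equals $H_i$, so $y\in\ker p\cap H=0$ and $g=0$. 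Maximality of $\mathcal{I}_G$ then forces $\pi(\ker p)\subseteq\mathcal{I}_G$, i.e.\ $\ker p\subseteq N$. Setting $\mathcal{K}:=p(N)$, a graded ideal of $\mathcal{M}$, one concludes $\mathcal{M}/\mathcal{K}\cong\mathcal{L}_{\max}(H)/(N+\ker p)=\mathcal{L}_{\max}(H)/N\cong\mathcal{L}_{\min}(G)$, exhibiting $\mathcal{L}_{\min}(G)$ as a section of $\mathcal{L}$.
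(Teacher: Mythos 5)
Your argument is correct, and since the paper states this lemma with no written proof (the \verb|\qed| signals that the authors treat it as immediate from the adjunction between $\mathcal L\mapsto\mathcal L_{loc}$ and $G\mapsto\mathcal L_{\max}(G)$ and the sandwiching $\mathcal L_{\max}\twoheadrightarrow\mathcal L\twoheadrightarrow\mathcal L_{\min}$ just discussed), you are simply supplying the routine verification the paper leaves implicit. The key steps are all sound: $\mathcal M_{loc}=H$ and $\ker p\cap H=0$ follow because $\mathcal L_{\max}(H)_i=H_i$ for $|i|\le1$ and $p|_H$ is the inclusion; $\pi(\ker p)$ is a graded ideal meeting $G$ trivially for the same degree reason, so maximality of $\mathcal I_G$ gives $\ker p\subseteq N:=\pi^{-1}(\mathcal I_G)$; and then $\mathcal M/p(N)\cong\mathcal L_{\max}(H)/N\cong\mathcal L_{\min}(G)$. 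One could streamline slightly by characterizing $N$ intrinsically as the largest graded ideal $I$ of $\mathcal L_{\max}(H)$ with $I\cap H\subseteq K$, which makes $\ker p\subseteq N$ immediate from $\ker p\cap H=0$, but your route through $\pi(\ker p)$ is equally valid.
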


\subsection{\it  Four basic simple Lie algebras of class \texorpdfstring{$\Ssr$}{}}

\

\medbreak
\noindent We  start recalling the definitions of some Lie algebras of class $\Ssr$.

\medbreak
\begin{itemize}[leftmargin=*]  

\item The {\it centerless Virasoro algebra} is  $W=\Der K[t,t^{-1}]$.
It has a natural grading, relative to which the element $L_n:=t^{n+1}\frac{\diff} {\diff t}$ is homogeneous of degree $n$. We
have $\fh=K. L_0$.

\medbreak
\item The {\it Witt algebra}  is $W(1)=\Der K[t]$; it is a graded subalgebra of $W$.

\medbreak
\item The Lie algebra $\fsl(2)$; it is the Lie subalgebra
of $W$ with basis $\{L_{-1}, L_0, L_1\}$. 

\medbreak
\item The contragredient Lie algebra $G(^{2\,2}_{2\,2})$. It is generated by five elements
$h, e_1, e_2, f_1, f_2$ and defined by the following relations
\begin{align} \label{eq:local-G}
[h,e_i] &= 2 e_i, & [h,f_i] &= -2 f_i, & [e_i,f_j] &= \delta_{i,j}\,h,
\end{align}
\noindent for any $i,j\in\{1,2\}$, where, as usual, $\delta_{i,j}$ is the Kronecker symbol. 
It has a $\Z$-grading relative to which 
the $e_i$'s  have degree one, $h$ has degree zero and
the $f_i$'s have degree $-1$.
\end{itemize}
\medbreak

These four $\Z$-graded Lie algebras and their rescalings 
are simple Lie algebras of class $\Ssr$ (for
the simplicity of $G(^{2\,2}_{2\,2})$ see
\cite{Kac}). They play a central role in what follows.

\subsection{\it  Non-Abelian free subalgebras of \texorpdfstring{$\Z$}{}-graded Lie algebras}

\

\medbreak

\noindent 
Let 
$G(^{2\,2}_{2\,2})_{loc}$ be the local part of the Lie algebra 
$G(^{2\,2}_{2\,2})$. 
Since $G(^{2\,2}_{2\,2})$ is generated by its local part and
is defined by local relations, we have
\begin{align*}G(^{2\,2}_{2\,2})=
\mathcal{L}_{\max}(G(^{2\,2}_{2\,2})_{loc}).\end{align*}

\begin{lemma}\label{Kac}
We have $G(^{2\,2}_{2\,2})=
\mathcal{L}_{\min}(G(^{2\,2}_{2\,2})_{loc})$.
\end{lemma}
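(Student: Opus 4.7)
My plan is to combine the simplicity of $G(^{2\,2}_{2\,2})$ (for which the authors have just directed us to the cited reference) with the identification $G(^{2\,2}_{2\,2}) = \mathcal{L}_{\max}(G(^{2\,2}_{2\,2})_{loc})$ established in the displayed equation immediately above the lemma. Given these two ingredients, the argument reduces to unpacking the definition of $\mathcal{L}_{\min}$.

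First I would unwind the definition: $\mathcal{L}_{\min}(G(^{2\,2}_{2\,2})_{loc}) = \mathcal{L}_{\max}(G(^{2\,2}_{2\,2})_{loc})/\mathcal{I}$, where $\mathcal{I}$ is the largest $\Z$-graded ideal of $\mathcal{L}_{\max}(G(^{2\,2}_{2\,2})_{loc})$ satisfying $\mathcal{I}\cap G(^{2\,2}_{2\,2})_{loc}=0$. By the preceding identification, $\mathcal{I}$ is a graded ideal of $G(^{2\,2}_{2\,2})$ itself. Since $G(^{2\,2}_{2\,2})_{loc}$ is nonzero (it contains $h$, $e_1$, $e_2$, $f_1$, $f_2$), the condition $\mathcal{I}\cap G(^{2\,2}_{2\,2})_{loc}=0$ forces $\mathcal{I}$ to be a \emph{proper} graded ideal of $G(^{2\,2}_{2\,2})$.

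Second, I would invoke simplicity: since $G(^{2\,2}_{2\,2})$ is simple as a $\Z$-graded Lie algebra, it has no nonzero proper graded ideals, so $\mathcal{I}=0$ and therefore
\begin{equation*}
\mathcal{L}_{\min}(G(^{2\,2}_{2\,2})_{loc}) \;=\; \mathcal{L}_{\max}(G(^{2\,2}_{2\,2})_{loc}) \;=\; G(^{2\,2}_{2\,2}).
\end{equation*}
There is no real obstacle in this lemma; its whole content is that the simplicity of $G(^{2\,2}_{2\,2})$, already imported from the external reference, is exactly what is needed to collapse the gap between $\mathcal{L}_{\max}$ and $\mathcal{L}_{\min}$.
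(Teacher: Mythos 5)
Your proof is correct and takes essentially the same route as the paper's, which is a one-line appeal to simplicity; you have simply unpacked what that appeal amounts to (the ideal $\mathcal{I}$ is graded, meets $G(^{2\,2}_{2\,2})_{loc}\neq 0$ trivially hence is proper, and graded simplicity then forces $\mathcal{I}=0$). No issues.
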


\begin{proof} This follows because the Lie algebra
$G(^{2\,2}_{2\,2})$ is simple \cite{Kac}.
\end{proof}

\begin{lemma}\label{free}
Let $\Lc$ be a $\Z$-graded Lie algebra. If
$G(^{2\,2}_{2\,2})_{loc}$ is a section of
$\Lc_{loc}$, the Lie algebra $\Lc$ contains a 
non-Abelian free Lie subalgebra.
\end{lemma}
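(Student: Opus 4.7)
The plan is to use Lemmas \ref{local} and \ref{Kac} to realise $G(^{2\,2}_{2\,2})$ itself as a section of $\Lc$, to identify a free non-Abelian Lie subalgebra inside its negative part, and to lift that subalgebra back to $\Lc$.

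First, applying Lemma \ref{local} to the hypothesis that $G(^{2\,2}_{2\,2})_{loc}$ is a section of $\Lc_{loc}$, I conclude that $\Lc_{\min}(G(^{2\,2}_{2\,2})_{loc})$ is a section of $\Lc$. By Lemma \ref{Kac} this Lie algebra coincides with $G(^{2\,2}_{2\,2})$ itself, so there exist a graded Lie subalgebra $\fq\subseteq\Lc$ and a graded ideal $\fm\triangleleft\fq$ with $G(^{2\,2}_{2\,2})\cong\fq/\fm$.

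Second, I claim that the subalgebra of $G(^{2\,2}_{2\,2})$ generated by $f_1$ and $f_2$ is free. The presentation \eqref{eq:local-G} displays $G(^{2\,2}_{2\,2})$ as the Lie algebra on the generators $h, e_1, e_2, f_1, f_2$ modulo the ideal generated by the local bracket relations, which is precisely the alternative description of $\Lc_{\max}(G(^{2\,2}_{2\,2})_{loc})$; comparing the two universal properties yields $G(^{2\,2}_{2\,2}) = \Lc_{\max}(G(^{2\,2}_{2\,2})_{loc})$. Combined with Lemma \ref{Kac}, this forces the defining ideal $\mathcal{I}$ of $\Lc_{\min}$ inside $\Lc_{\max}$ to vanish. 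Since by construction $\Lc_{\max}(G(^{2\,2}_{2\,2})_{loc})^- = \Free(G(^{2\,2}_{2\,2})_{-1}) = \Free(f_1,f_2)$, the claim follows.

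Third, I lift to $\Lc$. Choose preimages $\tilde f_1, \tilde f_2\in\fq$ of $f_1, f_2$ and let $M = \langle \tilde f_1, \tilde f_2\rangle \subseteq \fq\subseteq\Lc$. The universal property of the free Lie algebra gives a surjection $\phi\colon \Free(x_1,x_2)\twoheadrightarrow M$ with $x_i\mapsto \tilde f_i$, and post-composing with $\fq\twoheadrightarrow G(^{2\,2}_{2\,2})$ produces the Lie homomorphism $\Free(x_1,x_2)\to G(^{2\,2}_{2\,2})$ sending $x_i\mapsto f_i$. By Step 2 this composition is the canonical isomorphism $\Free(x_1,x_2)\xrightarrow{\sim}\Free(f_1,f_2)$ and is in particular injective; hence $\phi$ is injective as well, so $M\cong\Free(x_1,x_2)$ is a non-Abelian free Lie subalgebra of $\Lc$.

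The main obstacle is Step 2: one has to match the Lie-algebra presentation of $G(^{2\,2}_{2\,2})$ with Kac's construction of $\Lc_{\max}$ from its local part, and then use the simplicity input packaged in Lemma \ref{Kac} to collapse $\Lc_{\max}$ onto $\Lc_{\min}$. Once the negative part is identified with $\Free(f_1,f_2)$, the outer steps are formal; the lifting in Step 3 is just the Hopfian property of $\Free(x_1,x_2)$, namely that a surjective endomorphism fixing a set of free generators must be an isomorphism.
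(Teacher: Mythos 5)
Your proof is correct and takes essentially the same route as the paper: realise $G(^{2\,2}_{2\,2})$ as a section of $\Lc$ via Lemmas \ref{local} and \ref{Kac}, observe that the subalgebra generated by two Chevalley generators of the same sign is free of rank two because $G(^{2\,2}_{2\,2})=\Lc_{\max}$ of its local part (the paper uses $e_1,e_2$ where you use $f_1,f_2$, a symmetric choice), and lift this free section to an honest subalgebra of $\Lc$. You fill in the lifting step that the paper leaves as a one-line claim; the only blemish is the closing appeal to the Hopfian property, which is a slight mislabel — what Step~3 actually uses is the elementary fact that if $\psi\circ\phi$ is injective then so is $\phi$ — but the argument itself is sound.
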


\begin{proof} 
By Lemmas \ref{local} and \ref{Kac},
$G(^{2\,2}_{2\,2})$ is a section of $\Lc$.
The Lie subalgebra of $G(^{2\,2}_{2\,2})$ generated
by $e_1$ and $e_2$ is free of rank two. Hence 
$\Lc$ admits a 
non-Abelian free section, which can be lifted to
a Lie subalgebra of $\Lc$.
\end{proof}

\subsection{\it  A simple criterion for a section isomorphic to \texorpdfstring{$G(^{2\,2}_{2\,2})$}{}}
\
\medbreak
\noindent Let $\mathcal{L}$ be a $\Z$-graded Lie algebra of class $\Ssr$
with $\alpha\in{\mathcal L}_0^*$ as above. 
In this subsection and in the next two, we do not assume that $\mathcal{L}$ is simple as a $\Z$-graded algebra.
We will describe  criteria for the existence of a section
of $\Lc$  isomorphic to $G(^{2\,2}_{2\,2})$.

\medbreak
 For $n\neq 0$, let $B_n: \mathcal{L}_{-n}\times\mathcal{L}_n \to K$ be the bilinear map
\begin{align*}B_n:(x,y)\in\mathcal{L}_{-n}\times\mathcal{L}_n
\mapsto\alpha([x,y]).
\end{align*}
 Let $\Kc_n$ and 
$\Kc_{-n}$ be its right kernel and its left kernel.
 Since $\alpha([{\mathcal L}_0,{\mathcal L}_0])=0$,
 the bilinear map $B_n$ is ${\mathcal L}_0$-equivariant.
 Also $\Kc_0:=\Ker \alpha$ is an ideal of
 ${\mathcal L}_0$. 
 
 Given a finite-dimensional ${\mathcal L}_0$-module
 $M$, its {\it cosocle} is its maximal semi-simple quotient. It is denoted by $\Cosoc\,M$. 
 For any $\beta\in (\Lc_0/[\Lc_0,\Lc_0])^*$, let $K_\beta$
be the one-dimensional $\Lc_0$-module on which 
any  $h\in\Lc_0$ acts by
multiplication by $\beta(h)$. Obviously, $\Cosoc\,\Lc_n/ \Kc_n$ is direct sum of  copies of $K_{n\alpha}$.

\begin{lemma}\label{2gen} Assume that 
$\Cosoc\,\Lc_n/ \Kc_n$ has dimension $\geq 2$ for some $n>0$.
Then, up to a rescaling, $G(^{2\,2}_{2\,2})$ is a section
of $\mathcal{L}$.
\end{lemma}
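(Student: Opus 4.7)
The plan is to find $G(^{2\,2}_{2\,2})_{loc}$ as a local section of the local Lie algebra $\Lc_{-n} \oplus \Lc_0 \oplus \Lc_n$ (with bracket inherited from $\Lc$), and then transfer this to a section of $\Lc$. To deduce the Lemma from such a local section: let $\fq \subseteq \Lc$ be the Lie subalgebra generated by $\Lc_{-n} \cup \Lc_0 \cup \Lc_n$. Its grading lies in $n\Z$, so defining $\fq'_k := \fq_{nk}$ yields a $\Z$-graded Lie algebra $\fq'$ with $\fq = \fq'\{n\}$ whose local part equals $\Lc_{-n} \oplus \Lc_0 \oplus \Lc_n$. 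By Lemmas~\ref{local} and~\ref{Kac}, a local section of $\fq'_{loc}$ isomorphic to $G(^{2\,2}_{2\,2})_{loc}$ entails that $G(^{2\,2}_{2\,2}) = \mathcal{L}_{\min}(G(^{2\,2}_{2\,2})_{loc})$ is a graded section of $\fq'$, hence $G(^{2\,2}_{2\,2})\{n\}$ is a graded section of $\fq \subseteq \Lc$, which is the conclusion ``up to a rescaling''.

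The key tool will be duality between cosocle and socle. Set $M^\pm = \Lc_{\pm n}/\Kc_{\pm n}$; then $B_n$ descends to a nondegenerate $\Lc_0$-equivariant pairing $M^- \times M^+ \to K$. Since every composition factor of $M^\pm$ is $K_{\pm n\alpha}$, a quick dimension count with $R^+ := \operatorname{rad}(M^+)$ and $S^- := \operatorname{Soc}(M^-)$ shows that $R^+ = (S^-)^\perp$, which gives a nondegenerate induced pairing $S^- \times \Cosoc(M^+) \to K$, both sides being semisimple of common dimension $k \geq 2$. I would then choose $\bar e_1, \bar e_2 \in \Cosoc(M^+)$ linearly independent and, by this nondegeneracy, $\bar f_1, \bar f_2 \in S^-$ rescaled so that $B_n(\bar f_j, \bar e_i) = -(2/n)\,\delta_{ij}$. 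Lift to $e_i \in \Lc_n$ and $f_j \in \Lc_{-n}$, and pick $h_0 \in \Lc_0$ with $\alpha(h_0) = 2/n$.

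Next I would let $H_{\pm 1}$ be the $\Lc_0$-submodule of $\Lc_{\pm n}$ generated by the $e_i$ (respectively $f_j$), set $H_0 = \Lc_0$, and define the local ideal $I$ of $H$ by $I_1 = H_1 \cap R$ (where $R \subseteq \Lc_n$ is the preimage of $R^+$), $I_{-1} = H_{-1} \cap \Kc_{-n}$, and $I_0 = \Ker \alpha$. Each $I_j$ is manifestly $\Lc_0$-stable. The cross-bracket $[H_1, I_{-1}] \subseteq I_0$ is immediate from $I_{-1} \subseteq \Kc_{-n}$, while $[H_{-1}, I_1] \subseteq I_0$ uses $R^+ = (S^-)^\perp$: the images of $H_{-1}$ in $M^-$ lie in $S^-$ and those of $I_1$ in $M^+$ lie in $R^+$, so they pair to zero under $B_n$. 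In $H/I$, the components $(H/I)_{\pm 1}$ are two-dimensional on which the image $\bar h_0$ of $h_0$ acts by $\pm 2$ (by semisimplicity in cosocle and socle), $(H/I)_0 = K\bar h_0$, and the normalization yields $[\bar e_i, \bar f_j] = \delta_{ij}\,\bar h_0$, so $H/I \cong G(^{2\,2}_{2\,2})_{loc}$ by the presentation \eqref{eq:local-G}.

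I expect the main obstacle to be arranging the quotient to reproduce \eqref{eq:local-G} \emph{exactly}, rather than only up to lower-order terms. The essential device is to take $\bar f_1, \bar f_2$ inside $\operatorname{Soc}(M^-)$: this forces the image of $H_{-1}$ in $M^-$ to be semisimple, so that $\Lc_0$ acts on it via $-n\alpha$ on the nose (not merely modulo a nilpotent part). That semisimplicity is what makes the off-diagonal brackets $[\bar e_i, \bar f_j]$ for $i \neq j$ vanish in $(H/I)_0$ and identifies the two diagonal brackets $[\bar e_i, \bar f_i]$ as the \emph{same} element $\bar h_0 \in (H/I)_0 = K\bar h_0$, instead of as distinct scalar multiples of it.
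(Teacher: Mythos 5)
Your proof is correct and follows essentially the same route as the paper: use the nondegenerate $\Lc_0$-equivariant pairing $B_n$ to exhibit a local section isomorphic to $G(^{2\,2}_{2\,2})_{loc}$, then apply Lemmas~\ref{local} and~\ref{Kac}. The only difference is cosmetic---you work with a small local subalgebra $H$ generated by lifts of $\bar e_1,\bar e_2,\bar f_1,\bar f_2$, whereas the paper takes the larger local Lie algebra $\mathcal G = \mathcal I_1^\perp \oplus \Lc_0 \oplus \Lc_1$ and mods out by $\mathcal I = \Kc_{-1}\oplus\Ker\alpha\oplus\mathcal I_1$, both yielding the same quotient $G(^{2\,2}_{2\,2})_{loc}$.
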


\begin{proof} 
Since we did not assume that $\mathcal{L}$ is simple, 
we can assume that $n=1$. By assumption, there is
a $\mathcal{L}_0$-module $\mathcal{I}_1$ with
$\Kc_1\subset\mathcal{I}_1\subset \mathcal{L}_1$ such that
$\mathcal{L}_1/\mathcal{I}_1$ is isomorphic to
$K_\alpha\oplus K_\alpha$.
Let $\mathcal{L}'_{-1}\subset \mathcal{L}_{-1}$ be the orthogonal of $\mathcal{I}_1$  with respect to the bilinear
map $B_1$.

By definition $\mathcal{L}'_{-1}$ contains $\Kc_{-1}$
and the $\Lc_0$-module $\mathcal{L}'_{-1}/\Kc_{-1}$
is isomorphic to 
$K_{-\alpha}\oplus K_{-\alpha}$.
It follows that 
 $\mathcal{I}\coloneqq \Kc_{-1}\oplus \Kc_0\oplus {\mathcal I}_1$ is a local 
ideal of the local Lie algebra 
$\mathcal{G}\coloneqq \mathcal{L}'_{-1}\oplus \mathcal{L}_0
\oplus \mathcal{L}_1$. Since
$\mathcal{G}/\mathcal{I}$ is clearly isomorphic to the local part of
$G(^{2\,2}_{2\,2})$, 
it follows from Lemmas \ref{local} and \ref{Kac} that
$G(^{2\,2}_{2\,2})$ is a section of $\mathcal{L}$. 
\end{proof}

\subsection{\it Cyclic modules}

\

\medbreak
\noindent
Let $\fg$ be a   Lie algebra 
and let $M$ be a finite-dimensional 
$\fg$-module.

\begin{lemma}\label{cyclic} There is a polynomial
$P$ such that  the dimension of any
 cyclic $U(\fg)$-module in $M^{\otimes n}$ is bounded by
$P(n)$, for any $n\geq 1$.
\end{lemma}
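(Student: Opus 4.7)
The plan is to bound, uniformly in $v$, the quantity $\dim U(\fg)\cdot v$ by $\dim \rho_n(U(\fg))$, where $\rho_n\colon U(\fg)\to \End(M^{\otimes n})$ is the associative algebra map induced by the $\fg$-action on $M^{\otimes n}$; then it suffices to bound $\dim \rho_n(U(\fg))$ by a polynomial in $n$.

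The key observation is that $\fg$ acts diagonally on $M^{\otimes n}$, i.e.
\begin{align*}
\rho_n(x)=\sum_{i=1}^{n}\,1^{\otimes(i-1)}\otimes \rho(x)\otimes 1^{\otimes(n-i)},\qquad x\in\fg.
\end{align*}
Under the canonical identification $\End(M^{\otimes n})=\End(M)^{\otimes n}$, each $\rho_n(x)$ is a symmetric tensor, hence invariant under the natural action of the symmetric group $S_n$ permuting tensor factors. Since the $S_n$-invariants form a subalgebra, I conclude
\begin{align*}
\rho_n(U(\fg))\subset \bigl(\End(M)^{\otimes n}\bigr)^{S_n}\cong S^n\bigl(\End(M)\bigr).
\end{align*}

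Setting $d=\dim M$, this yields
\begin{align*}
\dim U(\fg)\cdot v \leq \dim\rho_n(U(\fg))\leq \dim S^n\bigl(\End(M)\bigr)=\binom{n+d^2-1}{d^2-1},
\end{align*}
which is a polynomial in $n$ of degree $d^2-1$. Hence one may take $P(n)=\binom{n+d^2-1}{d^2-1}$.

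The only subtlety is the $S_n$-invariance step; once that is made explicit, the rest is bookkeeping, and there is no real obstacle. The bound is crude but polynomial, which is exactly what is needed.
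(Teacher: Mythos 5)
Your proof is correct and is essentially the paper's argument: the paper likewise bounds the cyclic module by $\dim U(\fg)/\Ann(M^{\otimes n})$ and embeds that quotient into the $S_n$-invariants of $\End(M)^{\otimes n}$, arriving at the same binomial bound $\binom{n+d^2-1}{n}$. The only cosmetic difference is that you verify $S_n$-invariance on the Lie algebra generators of $U(\fg)$, whereas the paper invokes cocommutativity of the coproduct directly.
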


\begin{proof} For any positive integer $n$, let
$I_n$ be the annihilator of the $U(\fg)$-module 
$M^{\otimes n}$. Since the coproduct is cocommutative,
$U(\fg)/I_n$ embeds into $H^0(S_n, \End(M)^{\otimes n})$,
where the symmetric group $S_n$ acts on 
$\End(M)^{\otimes n}$ by permutation of the factors.
Thus for any cyclic $U(\fg)$-submodule $C$ of 
$M^{\otimes n}$,  the inequality
\begin{align*}
\dim C\leq \dim \, U(\fg)/I_n \leq 
\dim S^n (\End(M))= {\textstyle\binom{n+(\dim M)^2-1}{n}} 
\end{align*}
\noindent is a polynomial bound  of degree
$(\dim V)^2-1$. 
\end{proof} 

\subsection{\it  An improved criterion for a section isomorphic to \texorpdfstring{$G(^{2\,2}_{2\,2})$}{}}

\
\medbreak
\noindent Using the notation  of the previous 
sections, we show that the conclusion of Lemma \ref{2gen} holds with a weaker hypothesis.

\begin{lemma}\label{G(A)} Assume that $\mathcal{L}_n/\Kc_n$ has dimension
$\geq 2$ for some
$n>0$. Then, up to a rescaling, $G(^{2\,2}_{2\,2})$ is a section of $\mathcal{L}$.
\end{lemma}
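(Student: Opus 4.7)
My strategy is to reduce Lemma \ref{G(A)} to Lemma \ref{2gen} by showing the following: if $\dim \Lc_n/\Kc_n \geq 2$ for some $n$, then (possibly after passing to a rescaling) $\dim \Cosoc(\Lc_m/\Kc_m) \geq 2$ for some $m \geq 1$. Since $\Lc_0$ is nilpotent and acts on $\Lc_m/\Kc_m$ with generalized weight $m\alpha$, the cosocle is $K_{m\alpha}^{r_m}$ with $r_m \geq 1$; if the given $n$ already has $r_n \geq 2$ we are done by Lemma \ref{2gen}. Otherwise I assume for contradiction that $r_m \leq 1$ for every $m \geq 1$, so each $\Lc_m/\Kc_m$ is either zero or a cyclic $\Lc_0$-module with a unique maximal submodule of codimension $1$.

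I would next restrict attention to the graded subalgebra $\hat\Lc \subset \Lc$ generated by the local part $\Lc_{loc}$: since $\hat\Lc_{loc} = \Lc_{loc}$, any section of $\hat\Lc$ isomorphic to $G(^{2\,2}_{2\,2})$ also sits inside $\Lc$ via Lemmas \ref{local} and \ref{Kac}. In $\hat\Lc$ the degree-$m$ piece is the image of the iterated bracket $\Lc_1^{\otimes m}\to \hat\Lc_m$, so $\hat\Lc_m/\hat\Kc_m$ is an $\Lc_0$-equivariant cyclic quotient of $M^{\otimes m}$ with $M = \Lc_1$. Lemma \ref{cyclic} then yields a polynomial upper bound $\dim \hat\Lc_m/\hat\Kc_m \leq P(m)$ for some polynomial $P$.

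The main step — and the principal obstacle — is to produce an exponential lower bound contradicting this. Pick $x_1, x_2 \in \Lc_1$ with linearly independent images in $V \coloneqq \Lc_1/\Kc_1$ and dual $y_1, y_2 \in \Lc_{-1}$ satisfying $\alpha([y_i, x_j]) = \delta_{ij}$, possible because $B_1$ induces a perfect pairing on $(\Lc_{-1}/\Kc_{-1}) \times V$. For each word $w = (i_1, \ldots, i_m) \in \{1,2\}^m$ form the left-iterated brackets
\begin{align*}
x_w &\coloneqq [x_{i_1}, [x_{i_2}, [\cdots, x_{i_m}]\cdots]] \in \hat\Lc_m, &
y_w &\coloneqq [y_{i_1}, [y_{i_2}, [\cdots, y_{i_m}]\cdots]] \in \hat\Lc_{-m}.
\end{align*}
The plan is to show that the $2^m \times 2^m$ matrix $\bigl(\alpha([y_w, x_{w'}])\bigr)_{w, w'}$ is non-singular (in fact, upper triangular with nonzero diagonal after a suitable ordering of the words), whence the $x_w$ are linearly independent modulo $\hat\Kc_m$, giving $\dim \hat\Lc_m/\hat\Kc_m \geq 2^m$ and contradicting $P(m)$ for $m$ large.

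The combinatorial heart of the argument is to expand $\alpha([y_w, x_{w'}])$ by repeated Jacobi and exploit the crucial fact $\Kc_0 \supset [\Lc_0, \Lc_0]$ (so $\alpha$ vanishes on commutators in $\Lc_0$): the ``off-diagonal'' Jacobi contributions all land in $[\Lc_0, \Lc_0] \subset \Kc_0$ and are killed by $\alpha$, leaving only a diagonal term $\delta_{i_1 j_1}\cdots \delta_{i_m j_m}$ times a nonzero combinatorial constant coming from the basic pairing $\alpha([y_i, x_j]) = \delta_{ij}$. Making the bookkeeping of which Jacobi branches survive fully rigorous — especially verifying that no ``mixed'' branch contributes to the diagonal — is the technical obstacle I anticipate, and I expect the authors handle it either by an induction on $m$ or by a direct construction of the relevant local ideal of $\hat\Lc_{loc}$ in the spirit of Lemma \ref{2gen}.
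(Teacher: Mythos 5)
Your overall plan matches the paper's for the first half: reduce to $n=1$, pass to the graded subalgebra generated by the local part, and invoke Lemma \ref{cyclic} to get a polynomial bound on cyclic $\Lc_0$-submodules of $\Lc_m/\Kc_m$. You are also right that the goal is to conclude $\dim\Cosoc(\Lc_m/\Kc_m)$ is unbounded, which then feeds Lemma \ref{2gen}. But the step where you need a super-polynomial lower bound on $\dim\Lc_m/\Kc_m = \rk B_m$ is precisely where the paper invokes a substantial external result --- Lemma 7.9 of \cite{Mathieu-inv2} --- whereas you try to manufacture an exponential lower bound from scratch. That replacement does not work.

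Concretely, the claim that the matrix $\bigl(\alpha([y_w,x_{w'}])\bigr)_{w,w'}$ is (upper triangular and) non-singular is false in general. Already for $m=2$, expanding $\alpha\bigl([[y_{j_1},y_{j_2}],[x_{i_1},x_{i_2}]]\bigr)$ by Jacobi produces, besides the commutator terms like $[a_{i_1 j_2},a_{i_2 j_1}]\in[\Lc_0,\Lc_0]$ that $\alpha$ does kill, terms of the form $\alpha\bigl([[y_{j_1},a_{i_1 j_2}],x_{i_2}]\bigr)$ where $a_{ij}:=[y_j,x_i]\in\Lc_0$. These do not lie in $[\Lc_0,\Lc_0]$: they are values of $\alpha\circ B_1$ on $\ad(\Lc_0)$-translates of the $y$'s paired with the $x$'s, and there is no a priori control on them (they are not Kronecker deltas, and they need not vanish off the diagonal). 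Worse, nothing prevents the iterated brackets $x_w$ from being linearly dependent or even zero in $\Lc$. In short, $\rk B_m$ growing exponentially is not a formal consequence of $\rk B_1\geq 2$ and Jacobi; it requires the structure theory of class $\Ssr$ developed in \cite{Mathieu-inv2}, and even then Lemma 7.9 only yields super-polynomial (not exponential) growth --- which is exactly what suffices against the polynomial bound from Lemma \ref{cyclic}. You should replace the $x_w$-construction by a citation to that lemma; the rest of your argument then goes through.
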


\begin{proof} We can assume that $n=1$ and that 
$\mathcal{L}$ is generated by its local part.
This implies that $\mathcal{L}^+$ is generated by
$\mathcal{L}_1$.

By Lemma \ref{cyclic},  
the dimensions of the cyclic $\Lc_0$-modules in
$\mathcal{L}_1^{\otimes n}$ are boun\-ded by a polynomial 
on $n$. Obviously, 
 the same  property holds for its quotient
$\mathcal{L}_n/\Kc_n$. 
However, by Lemma 7.9 of \cite{Mathieu-inv2}, the function
$n\mapsto\rk\,B_n=\dim \mathcal{L}_n/\Kc_n$ has infinite growth (i.e. it is not bounded by a polynomial). Hence,
when $n$ goes to $\infty$, the minimal
number of generators of the $\Lc_0$-module $\mathcal{L}_n/\Kc_n$
is arbitrarily large.
Thus the function $n\mapsto 
\dim\,\Cosoc(\mathcal{L}_n/\Kc_n)$ is unbounded.

Therefore, for some $n$, we have  
\begin{align*}
\dim\,\Cosoc(\mathcal{L}_n/\Kc_n) &\geq 2.
\end{align*}
Thus by Lemma \ref{2gen}, 
$G(^{2\,2}_{2\,2})$ is a section of $\mathcal{L}$.
\end{proof}

\subsection{\it  The dichotomy for the class \texorpdfstring{$\Ssr$}{}}

\

\medbreak  
\noindent From now on, we assume that the Lie algebra $\mathcal{L}$ of class $\Ssr$ is simple.
 It is implicitly proved in \cite[Chapter 8]{Mathieu-inv2} 
 that $\mathcal{L}$ is isomorphic to $W$ or $W(1)$, under the hypothesis
 \leqnomode
\begin{align}\tag{$\mathcal{H}_1$} 
\text{all bilinear forms $B_n$ have rank $\leq 1$.}
\end{align}
 Unfortunately, the explicit hypothesis used in \cite[Chapter 8]{Mathieu-inv2} is 
\begin{align}\tag{$\mathcal{H}_2$} 
\text{the Lie algebra $\mathcal{L}$ has intermediate growth.}
\end{align}

It would be long to go into the details of \emph{loc. cit.}
to explain why $(\mathcal{H}_1)$ can be used instead of $(\mathcal{H}_2)$. 
Here we can assume that 
$\mathcal{L}$ is finitely generated. 
Under this additional hypothesis,  the next lemma gives an easy explanation.

\begin{lemma}\label{H1H2} If $\mathcal{L}$ is finitely generated,
then  $(\mathcal{H}_1)$ implies $(\mathcal{H}_2)$.
\end{lemma}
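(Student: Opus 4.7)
Set $\Kc_0 := \ker\alpha$. The first step is to verify, by a direct Jacobi identity computation analogous to the one in the proof of Lemma~\ref{2gen}, that $\mathcal{K} := \bigoplus_{n \in \Z} \Kc_n$ is a graded Lie subalgebra of $\mathcal{L}$. The two key ingredients are that $\alpha$ vanishes on $[\mathcal{L}_0,\mathcal{L}_0]$ (because $\alpha$ is a weight of the nilpotent algebra $\mathcal{L}_0$) and that $B_n(x,y) = 0$ whenever $x \in \Kc_{-n}$ or $y \in \Kc_n$. Under $(\mathcal{H}_1)$ each $\Kc_n$ has codimension $\leq 1$ in $\mathcal{L}_n$, so the graded quotient $\mathcal{L}/\mathcal{K}$ has graded pieces of dimension $\leq 1$ and hence linear growth. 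Consequently $\dim\mathcal{L}_n \leq \dim\Kc_n + 1$, and the problem reduces to showing that $\mathcal{K}$ itself has intermediate growth.

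The finite generation of $\mathcal{L}$ now enters crucially: by Lemma~\ref{fg}, $\mathcal{L}^+$ (resp.\ $\mathcal{L}^-$) is generated by finitely many homogeneous elements in degrees $\leq D$ (resp.\ $\geq -D$). Writing each generator as $e_i + k_i$ with $k_i \in \Kc_{d_i}$ and $e_i$ in a chosen $1$-dimensional complement to $\Kc_{d_i}$, iterated brackets of the generators split into ``pure'' $e$-brackets (contributing at most a $1$-dimensional subspace in each degree, by $(\mathcal{H}_1)$) and ``mixed'' brackets involving at least one $k_i$ and hence lying in the subalgebra $\mathcal{K}$. A Jacobi-based recursion using $(\mathcal{H}_1)$ to control the mixed brackets should then yield a polynomial bound on $\dim\Kc_m$ as $m \to \pm\infty$, giving intermediate (in fact polynomial) growth for $\mathcal{L}$, i.e.\ $(\mathcal{H}_2)$.

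The main obstacle is precisely this recursion. Since $\mathcal{K}$ is in general neither an ideal of $\mathcal{L}$ nor a priori finitely generated, propagating the rank-one condition into a quantitative growth bound requires careful book-keeping: each new $\Kc$-component in degree $m$ must be expressed, via Jacobi and the rank-$\leq 1$ forms $B_n$, in terms of lower-degree data that is itself controlled by $(\mathcal{H}_1)$. As an alternative route to the same conclusion, one could instead apply Lemma~\ref{interM} to the adjoint module $\mathcal{L}$ together with any $v \in \mathcal{L}_n \setminus \Kc_n$ (such an $n$ exists, because otherwise $\alpha$ would vanish on $[\mathcal{L},\mathcal{L}]_0 = \mathcal{L}_0$, contradicting $\alpha \neq 0$), thereby reducing the task to proving intermediate growth of the single orbit $[\mathcal{L},v]$.
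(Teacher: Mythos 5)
There is a genuine gap. Your two routes both stall, and the correct move is the one you circle around but do not land on: dualize.

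Your first route (show $\mathcal{K}=\bigoplus_n\Kc_n$ is a graded subalgebra of $\Lc$-codimension $\leq 1$ in each degree, then bound $\dim\Kc_m$ by a ``Jacobi-based recursion'') stalls exactly where you say it does. The verification that $\mathcal{K}$ is a subalgebra is correct, but the recursion you need is never produced, and there is no evident way to run it: $(\mathcal{H}_1)$ controls the ranks of the maps $\mathcal{L}_{-n}\to\mathcal{L}_n^*$, not the dimensions of the $\Kc_m$ themselves, and $\mathcal{K}$ need not be finitely generated or an ideal, as you note.

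Your alternative route (apply Lemma~\ref{interM} to the adjoint module with $v\in\mathcal{L}_n\setminus\Kc_n$) applies the lemma to the wrong module. To invoke Lemma~\ref{interM} you must first check its hypothesis, namely that $\mathcal{L}.v=[\mathcal{L},v]$ has intermediate growth, and $(\mathcal{H}_1)$ gives you no control over $\dim[\mathcal{L}_m,v]$; the ranks of the $B_m$ constrain $\alpha$ composed with brackets, not the images of $\ad(v)$. You have reduced to a statement that does not visibly follow from $(\mathcal{H}_1)$.

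The paper instead applies Lemma~\ref{interM} to the \emph{graded dual} $M:=\bigoplus_{n\in\Z}\mathcal{L}_n^*$ of the adjoint module, with the homogeneous vector $v=\alpha\in\mathcal{L}_0^*$. This is the module for which $(\mathcal{H}_1)$ says something directly: for $x\in\mathcal{L}_{-n}$ one has $(x.\alpha)(y)=-\alpha([x,y])=-B_n(x,y)$, so the degree-$n$ homogeneous component of $\mathcal{L}.\alpha$ is the image of the map $x\mapsto B_n(x,\cdot)$ and has dimension $\rk B_n\leq 1$. Thus $\mathcal{L}.\alpha$ has homogeneous components of dimension $\leq 1$, in particular intermediate growth. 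Since $\mathcal{L}$ is simple as a $\Z$-graded Lie algebra, the adjoint module is simple graded and hence so is its graded dual $M$, so Lemma~\ref{interM} applies and gives that $M$ has intermediate growth; as $\dim M_n=\dim\mathcal{L}_{-n}$, so does $\mathcal{L}$, which is $(\mathcal{H}_2)$. The shift from $\Lc$ to its graded dual is the missing idea.
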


\begin{proof} Let 
$M\coloneqq \oplus_{n\in\Z}\,\mathcal{L}_n^*$ be the graded dual of the adjoint module. The hypothesis $(\mathcal{H}_1)$
means that  the $\Z$-graded space $\mathcal{L}.\alpha$ has
homogenous components of dimension $\leq 1$. By Lemma \ref{interM}, we see that $M$ has intermediate growth, i.e.  $(\mathcal{H}_2)$ holds.
\end{proof}

The following result is implicitly proved in \cite{Mathieu-inv2}, even without the hypothesis of finite generation.

\begin{theorem}\label{thmS} Let $\mathcal{L}$ be a simple
$\Z$-graded Lie algebra of class $\Ssr$.
Assume that $\mathcal{L}$ is finitely generated. Then
\begin{enumerate} [leftmargin=*,label=\rm{(\roman*)}]
\item either $\mathcal{L}$ is isomorphic to $\fsl(2)$, $W(1)$ or $W$,

\item or $\mathcal{L}$  contains a nonabelian free Lie algebra.
\end{enumerate}
\end{theorem}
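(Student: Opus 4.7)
The plan is to run a clean dichotomy based on the ranks of the bilinear forms $B_n:\mathcal{L}_{-n}\times\mathcal{L}_n\to K$ that have already been set up. Either (a) there exists $n>0$ with $\dim\mathcal{L}_n/\Kc_n\geq 2$, or (b) every $B_n$ has rank at most one, i.e.\ hypothesis $(\mathcal{H}_1)$ holds. I will show that case (a) leads to alternative (ii) of the theorem and case (b) leads to alternative (i), which together exhaust the possibilities.

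In case (a), Lemma \ref{G(A)} produces, up to a rescaling of the grading, a (graded) section of $\mathcal{L}$ isomorphic to $G(^{2\,2}_{2\,2})$. Inspecting the proof of Lemma \ref{G(A)} (via Lemma \ref{2gen}), this section is actually constructed by exhibiting $G(^{2\,2}_{2\,2})_{loc}$ as a local section of the local part of the rescaled algebra. Hence Lemma \ref{free} applies to the rescaling and yields a non-Abelian free Lie subalgebra inside it. Since a rescaling $\mathcal{L}\{m\}$ is the same Lie algebra as $\mathcal{L}$ with only the grading relabelled, this free subalgebra sits inside $\mathcal{L}$ itself, giving conclusion (ii).

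In case (b), $(\mathcal{H}_1)$ holds. Combined with the finite generation assumption, Lemma \ref{H1H2} upgrades this to $(\mathcal{H}_2)$, namely that $\mathcal{L}$ has intermediate growth. At this point I invoke the classification of simple $\Z$-graded Lie algebras of class $\Ssr$ with intermediate growth carried out in \cite[Chapters~7--8]{Mathieu-inv2}, which identifies such $\mathcal{L}$ with one of $\fsl(2)$, $W(1)$, or $W$ (after normalising the grading, as is implicit in our conventions). This is exactly alternative (i).

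The main obstacle in this plan is the last step: the classification in \cite{Mathieu-inv2} is organised around $(\mathcal{H}_2)$ rather than $(\mathcal{H}_1)$, and extracting a self-contained statement of it from the body of \emph{loc.\ cit.}\ is nontrivial. Lemma \ref{H1H2} is exactly the bridge that lets us enter that framework from our rank hypothesis, and once one accepts the classification as a black box, the remainder of the argument is essentially the packaging of the dichotomy above.
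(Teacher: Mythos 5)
Your proposal is correct and follows essentially the same path as the paper: the same dichotomy on the ranks of the forms $B_n$, Lemma \ref{G(A)} followed by Lemma \ref{free} in the high-rank case, and Lemma \ref{H1H2} feeding into the classification from \cite{Mathieu-inv2} (the paper cites Proposition~8.9 specifically) in the rank~$\leq 1$ case. The only difference is cosmetic: you spell out explicitly why a rescaling is harmless in case~(a), which the paper leaves implicit.
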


\begin{proof} (i) First assume that the bilinear form
$B_n$ has rank $\leq 1$ for any $n$. By Lemma \ref{H1H2},
 $\mathcal{L}$ has intermediate growth. Thus it follows from Proposition 8.9
of \cite{Mathieu-inv2} that $\mathcal{L}$ is isomorphic to
$\fsl(2)$, $W(1)$ or $W$.

\smallbreak
(ii) Otherwise, the bilinear form has rank 
$\geq 2$ for some $n$. By Lemma \ref{G(A)} the Lie algebra 
$G(^{2\,2}_{2\,2})$ is a section of $\mathcal{L}$. 
By Lemma \ref{free}, the Lie
algebra 
$\mathcal{L}$  contains a nonabelian free Lie algebra.
\end{proof}

\begin{cor}\label{corS} Let $\mathcal{L}$ be a simple
$\Z$-graded Lie algebra of class $\Ssr$. 
Then $U(\mathcal{L})$ is not Noetherian, except if
$\mathcal{L}$ is isomorphic to $\fsl(2)$.
\end{cor}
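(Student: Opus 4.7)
The plan is to argue by contradiction: assume that $U(\mathcal{L})$ is Noetherian and that $\mathcal{L}\not\cong\fsl(2)$, and derive a contradiction using Theorem \ref{thmS}.

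First I would check that Theorem \ref{thmS} applies. By Lemma \ref{basic}\ref{fp}, the Noetherianity of $U(\mathcal{L})$ forces $\mathcal{L}$ to be finitely presented, hence in particular finitely generated. Therefore one of the two alternatives of Theorem \ref{thmS} must hold.

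In case (ii), $\mathcal{L}$ contains a non-abelian free Lie subalgebra $\Free(Z)$ with $\dim Z\geq 2$. A Lie subalgebra is a section (take the ideal $\fm=0$), so Lemma \ref{basic}\ref{section} would make $U(\Free(Z))\cong T(Z)$ Noetherian, contradicting example (i) in Section \ref{prelim}. In case (i), the possibilities $\mathcal{L}\cong W(1)$ and $\mathcal{L}\cong W$ both contain $W(1)$ as a graded Lie subalgebra (obvious for $W(1)$, and because $W(1)$ sits inside $W=\Der K[t,t^{-1}]$ by restriction). Hence $W(1)$ is a section of $\mathcal{L}$, and Lemma \ref{basic}\ref{section} again implies that $U(W(1))$ is Noetherian, contradicting Theorem \ref{thm:sierra-walton}. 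The case $\mathcal{L}\cong\fsl(2)$ is excluded by hypothesis.

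There is no real obstacle here: the delicate work was already done in Theorem \ref{thmS}, and the only thing to verify is that each alternative feeds into one of the already-known non-Noetherian examples through the section-stability of Noetherianity recorded in Lemma \ref{basic}\ref{section}.
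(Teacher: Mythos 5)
Your proposal is correct and follows essentially the same route as the paper: reduce to the finitely generated case via Lemma \ref{basic}\ref{fp}, invoke Theorem \ref{thmS}, and dispose of each alternative via Theorem \ref{thm:sierra-walton} (noting $W(1)\subset W$) and the tensor-algebra example, using Lemma \ref{basic}\ref{section} to pass to subalgebras. The only stylistic difference is that you phrase it as an explicit contradiction while the paper argues directly; the substance is identical.
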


\begin{proof} Assume that $\mathcal{L}$ is infinite-dimensional. 
By Lemma \ref{basic}\ref{fp}, we can assume that $\Lc$ is finitely generated.
By Theorem \ref{thmS}, $\mathcal{L}$
contains a subalgebra isomorphic to the Witt algebra
$W(1)$ or  a nonabelian free subalgebra.
In the first case, Theorem \ref{thm:sierra-walton} implies
that $U(\mathcal{L})$ is not Noetherian. In the second case,
we already observed that the enveloping algebra of a nonabelian free Lie algebra
is not Noetherian, so neither is $U(\mathcal{L})$.
\end{proof}

\section{\texorpdfstring{$\Z$}{}-graded Lie algebras of rank \texorpdfstring{$\geq2$}{}}\label{rk2}

\noindent In this section  we investigate the Noetherianity condition for $\Z$-graded Lie algebras of rank $\geq 2$.

\subsection{\it  Weakly \texorpdfstring{$\Z$}{}-graded Lie algebras}

\

\medbreak

\noindent
We will encounter Lie algebras 
$\mathcal{M}$  with a decomposition
$\mathcal{M}=\oplus \mathcal{M}_n$ satisfying
$[\mathcal{M}_n,\mathcal{M}_m]\subset
\mathcal{M}_{n+m}$ where the homogeneous components
could be of infinite dimension; we shall
 call them {\it weakly $\Z$-graded} Lie algebras.

 \medbreak
\begin{lemma}\label{wgraded}
Let $\mathcal{L}=\oplus_{n\in\Z}\,\mathcal{L}_n$ be a  weakly $\Z$-graded  Lie algebra such that $U(\mathcal{L})$ is Noetherian.
Then  $\mathcal{L}_n$ is finite-dimensional
for any $n \in \Z\setminus 0$. 

Moreover if $\mathcal{L}$ is simple and $\mathcal{L} \neq\mathcal{L}_0$, then  $\mathcal{L}_0$ is also  finite-dimensional. 
\end{lemma}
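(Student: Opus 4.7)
The plan is to treat the two claims in turn. For the first, fix $n \neq 0$ and observe that all $kn$ with $k \geq 1$ share the sign of $n$, so $\mathfrak{q}_n \coloneqq \bigoplus_{k \geq 1} \mathcal{L}_{kn}$ is a Lie subalgebra of $\mathcal{L}$ and $\mathfrak{m}_n \coloneqq \bigoplus_{k \geq 2} \mathcal{L}_{kn}$ is an ideal of $\mathfrak{q}_n$, since $k_1 + k_2 \geq 2$ whenever $k_1, k_2 \geq 1$. The quotient $\mathfrak{q}_n / \mathfrak{m}_n$ is canonically isomorphic to $\mathcal{L}_n$ endowed with the zero bracket, so it is an abelian section of $\mathcal{L}$, and Lemma \ref{basic}\ref{abelian-section} yields $\dim \mathcal{L}_n < \infty$.

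For the second claim, assume $\mathcal{L}$ is simple and $\mathcal{L} \neq \mathcal{L}_0$. The subalgebras $\mathcal{L}_{>0}$ and $\mathcal{L}_{<0}$ have Noetherian enveloping algebras by Lemma \ref{basic}\ref{section}, and hence are finitely generated by Lemma \ref{basic}\ref{fp}. Decomposing a finite generating set into homogeneous components, one can pick $d \geq 1$ such that $V \coloneqq \bigoplus_{1 \leq i \leq d} \mathcal{L}_i$ generates $\mathcal{L}_{>0}$ and $V' \coloneqq \bigoplus_{-d \leq i \leq -1} \mathcal{L}_i$ generates $\mathcal{L}_{<0}$; by the first part, $V \oplus V'$ is finite-dimensional.

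The goal is now to show that the adjoint map $\rho \colon \mathcal{L}_0 \to \mathfrak{gl}(V \oplus V')$ is injective, which immediately gives $\dim \mathcal{L}_0 < \infty$. Let $K \coloneqq \ker \rho$. For $x \in K$, $\ad(x)$ is a derivation of $\mathcal{L}_{>0}$ that vanishes on the generating set $V$, so $[K, \mathcal{L}_{>0}] = 0$; likewise $[K, \mathcal{L}_{<0}] = 0$. Hence $K = \{x \in \mathcal{L}_0 \mid [x, \mathcal{L}_n] = 0 \text{ for all } n \neq 0\}$. A short Jacobi check then shows that $K$ is in fact a graded ideal of the entire Lie algebra $\mathcal{L}$: brackets of $K$ with $\mathcal{L}_{\neq 0}$ vanish by definition of $K$, and for $y \in \mathcal{L}_0$, $x \in K$, $z \in \mathcal{L}_n$ with $n \neq 0$, the identity $[[y,x],z] = [y,[x,z]] - [x,[y,z]] = 0$ shows $[y,x] \in K$. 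Simplicity then forces $K = 0$ or $K = \mathcal{L}$, and the inclusion $K \subset \mathcal{L}_0 \neq \mathcal{L}$ rules out the latter, so $\rho$ is injective.

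The main obstacle I anticipate is precisely the step in which $K$ is promoted from a subalgebra of $\mathcal{L}_0$ to a graded ideal of the whole $\mathcal{L}$: without this promotion, simplicity of $\mathcal{L}$ cannot be invoked. The promotion is enabled by the finite generation of $\mathcal{L}_{>0}$ and $\mathcal{L}_{<0}$, which is what allows one to test triviality of the adjoint action of $\mathcal{L}_0$ on the a priori infinite sum $\bigoplus_{n \neq 0} \mathcal{L}_n$ against the finite-dimensional subspace $V \oplus V'$.
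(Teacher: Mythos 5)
Your proof is correct. The second part mirrors the paper's argument essentially step for step: finite generation of $\mathcal{L}_{>0}$ and $\mathcal{L}_{<0}$ gives a finite-dimensional homogeneous generating space $V\oplus V'$, one defines $K$ as the kernel of the adjoint action of $\mathcal{L}_0$ on it, and a Jacobi argument promotes $K$ to a proper ideal, which simplicity kills (you spell out the Jacobi check somewhat more explicitly than the paper does, which is welcome).

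The first part, however, takes a genuinely different and neater route. The paper deduces from Lemma \ref{basic}\ref{fp} that $\mathcal{L}_{>0}$ is finitely generated as a Lie algebra, and then observes that a $\Z_{\geq 1}$-graded Lie algebra with a finite homogeneous generating set has finite-dimensional components by degree induction (since brackets of positive-degree elements land in strictly higher degree). Your argument instead fixes $n\neq 0$ and exhibits $\mathcal{L}_n$ directly as the abelian section $\mathfrak{q}_n/\mathfrak{m}_n$ with $\mathfrak{q}_n=\bigoplus_{k\geq1}\mathcal{L}_{kn}$ and $\mathfrak{m}_n=\bigoplus_{k\geq2}\mathcal{L}_{kn}$, which Lemma \ref{basic}\ref{abelian-section} then forces to be finite-dimensional. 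This avoids the finite-generation detour and the degree induction entirely, at the cost of applying the abelian-section criterion separately for each $n$; the paper's route, once set up, gives all the components of $\mathcal{L}^{+}$ (and likewise $\mathcal{L}^{-}$) at once and has the generating space $V\oplus V'$ already in hand for the second half of the proof, whereas you re-derive finite generation for that step. Both are valid and short; yours is arguably the slicker argument for the first assertion taken in isolation.
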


\begin{proof} As before, set
$\Lc^+=\mathcal{L}_{>0}$ and 
$\Lc^-=\mathcal{L}_{<0}$.

By Lemma \ref{basic}, $U(\Lc^+)$ is Noetherian,
hence  $\Lc^+$  is finitely generated. It follows easily
 that all homogeneous components of 
$\Lc^+$ are finite-dimensional. 
Similarly, all homogenous components of $\Lc^-$ are finite-dimensional, which proves the first assertion.

Assume now that $\mathcal{L}$ is simple and $\mathcal{L} \neq\mathcal{L}_0$.
Since $\Lc^+$ and $\Lc^-$ are finitely generated, there is an integer $d>0$ such that
$\oplus_{1\leq i\leq d}\,\Lc_i$ generates $\Lc^+$ and
$\oplus_{1\leq i\leq d}\,\Lc_{-i}$ generates $\Lc^-$. Set
\begin{align*}
M & = \oplus_{1\leq i\leq d}\,\Lc_i \, \bigoplus \, 
\oplus_{1\leq i\leq d}\,\Lc_{-i}
\end{align*}
and let $\Kc$ be the annhilator in 
$\Lc_0$ of the $\Lc_0$-module $M$. Since $M$ is a finite-dimensional $\Lc_0$-module,
$\Lc_0/\Kc=0$ is finite-dimensional.
We have 
\begin{align}
[\Kc,\Lc^{\pm}]=0, 
[\Kc,\Lc_0]\subset \Lc_0 \text{, and } \Kc\subset\Lc_0.
\end{align} 

\noindent Hence $\Kc$ is a proper ideal of $\Lc$. Since $\Lc$ is simple, the ideal $\Kc$ is trivial. It follows that 
$\Lc_0$ is finite-dimensional. 
\end{proof}

\subsection{\it  The hypothesis \texorpdfstring{$(\mathcal{H})$}{}}

\

\medbreak
\noindent Let $\mathcal{L}$ be a $\Z$-graded Lie algebra.
Consider the following hypothesis 
\begin{align}\tag{$\mathcal{H}$} 
\quad \text{There exist } &\widetilde{\alpha},\widetilde{\beta}\in Q, \,
\widetilde{\beta} \notin \Q.\widetilde{\alpha}, \, \text{such that} \,
({\widetilde\beta}+\Z.{\widetilde\alpha}) \cap \Delta \, \text{is infinite.}
\end{align}

\begin{lemma}\label{H1} If  $\mathcal{L}$ satisfies the hypothesis $(\mathcal{H})$,
then $U(\mathcal{L})$ is not Noetherian.
\end{lemma}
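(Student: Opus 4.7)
The plan is to assume $U(\mathcal{L})$ is Noetherian and derive a contradiction by exhibiting, inside $\mathcal{L}$, a Lie subalgebra which, after a suitable regrading, becomes a weakly $\Z$-graded Lie algebra with an infinite-dimensional homogeneous component in a nonzero degree, contradicting Lemma~\ref{wgraded}.

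Note first that hypothesis~$(\mathcal{H})$ forces $\widetilde{\alpha}\ne 0$, since otherwise $\widetilde{\beta}+\Z\widetilde{\alpha}$ would be the singleton $\{\widetilde{\beta}\}$. Hence $\widetilde{\alpha}$ and $\widetilde{\beta}$ are $\Q$-linearly independent, and the subgroup $H:=\Z\widetilde{\alpha}\oplus\Z\widetilde{\beta}\subset Q$ is free abelian of rank~$2$. I would then set
\begin{align*}
\mathfrak{q}\;:=\;\bigoplus_{\widetilde{\gamma}\in H}\mathcal{L}^{\widetilde{\gamma}}.
\end{align*}
Because $H$ is closed under addition and the bracket of generalized root spaces satisfies $[\mathcal{L}^{\widetilde{\gamma}_1},\mathcal{L}^{\widetilde{\gamma}_2}]\subset\mathcal{L}^{\widetilde{\gamma}_1+\widetilde{\gamma}_2}$ (recall $\fh$ is nilpotent), $\mathfrak{q}$ is a Lie subalgebra of $\mathcal{L}$.

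Next I would regrade $\mathfrak{q}$ by reading off the $\widetilde{\beta}$-coordinate in $H\cong\Z^2$: put $\mathfrak{q}_m:=\bigoplus_{k\in\Z}\mathcal{L}^{k\widetilde{\alpha}+m\widetilde{\beta}}$ for $m\in\Z$. The unique expression of elements of $H$ as $k\widetilde{\alpha}+m\widetilde{\beta}$ yields $\mathfrak{q}=\bigoplus_{m\in\Z}\mathfrak{q}_m$, and $[\mathfrak{q}_m,\mathfrak{q}_{m'}]\subset\mathfrak{q}_{m+m'}$ is immediate, so $\mathfrak{q}$ is weakly $\Z$-graded in the sense of Subsection~6.1. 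Hypothesis~$(\mathcal{H})$ provides infinitely many distinct $\widetilde{\beta}+k_j\widetilde{\alpha}\in\Delta$, whose non-zero root spaces sit in direct sum inside $\mathfrak{q}_1$; thus $\dim\mathfrak{q}_1=\infty$.

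To close, suppose for contradiction that $U(\mathcal{L})$ is Noetherian. Since $\mathfrak{q}$ is a subalgebra of $\mathcal{L}$, hence a section, Lemma~\ref{basic}\ref{section} shows $U(\mathfrak{q})$ is Noetherian; Lemma~\ref{wgraded} then forces $\mathfrak{q}_1$ to be finite-dimensional, contradicting $\dim\mathfrak{q}_1=\infty$. Hence $U(\mathcal{L})$ is not Noetherian. The only points requiring a little care are the rank-$2$ freeness of $H$ (from $\widetilde{\beta}\notin\Q\widetilde{\alpha}$), the standard bracket behavior on generalized root spaces, and the verification that the new decomposition of $\mathfrak{q}$ meets the weak $\Z$-grading axioms; I do not expect any substantive obstacle. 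The real content is the pairing of $(\mathcal{H})$---which concentrates infinitely many roots in one coset of $\Z\widetilde{\alpha}$---with Lemma~\ref{wgraded}, which prohibits such accumulation inside any weakly $\Z$-graded Lie algebra with Noetherian enveloping algebra.
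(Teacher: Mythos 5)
Your proof is correct and follows essentially the same strategy as the paper: build a weakly $\Z$-graded Lie subalgebra of $\mathcal{L}$ whose degree-$1$ component contains the infinitely many root spaces supplied by $(\mathcal{H})$, then invoke Lemma~\ref{wgraded} together with Lemma~\ref{basic}\ref{section}. The only cosmetic difference is that you work with the full rank-two lattice $H=\Z\widetilde{\alpha}\oplus\Z\widetilde{\beta}$ graded by the $\widetilde{\beta}$-coordinate, whereas the paper restricts to the positively graded part $\mathcal{M}=\oplus_{k\geq 1}\mathcal{M}_k$ with $\mathcal{M}_k=\oplus_{\widetilde\gamma\in(k\widetilde\beta+\Z\widetilde\alpha)\cap\Delta}\mathcal{L}^{\widetilde\gamma}$; both choices give the required contradiction.
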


\begin{proof} For any integer $k\geq 1$, set 
$\Delta(k)=(k.{\widetilde\beta}+\Z.{\widetilde\alpha})\cap \Delta$ and 
\begin{align*}\mathcal{M}_k
=\oplus_{\widetilde\gamma\in \Delta(k)}\,\mathcal{L}^{\widetilde\gamma}.
\end{align*}
 Since we have $[\mathcal{M}_k,\mathcal{M}_l]\subset
\mathcal{M}_{k+l}$ for any $k,l\geq 1$,
the vector space 
\begin{align*}\mathcal{M}\coloneqq \oplus_{k\geq 1}\,\mathcal{M}_k
\end{align*} 
is a weakly $\Z$-graded Lie algebra. Since 
$\widetilde{\alpha}$ and $\widetilde{\beta}$ are linearly independent over $\Q$, the sets $\Delta(k)$ are pairwise disjoint. Hence
$\mathcal{M}$ is a Lie subalgebra of  $\mathcal{L}$. 
Since $\mathcal{M}_1$ is infinite-dimensional, by
Lemma \ref{wgraded}
$U(\mathcal{M})$ is not Noetherian and 
by  Lemma \ref{basic}  $U(\mathcal{L})$ is not Noetherian.
\end{proof}

\subsection{\it  Constructions of ideals in Lie algebras}

\

\medbreak
\noindent The next  two lemmas show that certain subspaces of a Lie algebra
are indeed ideals. Results of this kind are useful in the study of simple Lie algebras.

\smallbreak Let $\mathcal{L}$ be a Lie algebra.

\begin{lemma}\label{b+bb} \cite[Lemma 6]{Mathieu-jalg}
Let $\mathcal{A}$ and 
$\mathcal{B}$ be linear subspaces of $\Lc$
 such that $\mathcal{L}=\mathcal{A}+\mathcal{B}$
 and $[\mathcal{A},\mathcal{B}]\subset \mathcal{B}$.
Then $\mathcal{B}+ [\mathcal{B},\mathcal{B}]$ is an ideal of 
$\mathcal{L}$. \qed
\end{lemma}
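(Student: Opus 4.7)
The plan is to set $\mathcal{C} \coloneqq \mathcal{B} + [\mathcal{B},\mathcal{B}]$ and verify directly that $[\mathcal{L},\mathcal{C}] \subset \mathcal{C}$. Using $\mathcal{L} = \mathcal{A} + \mathcal{B}$ and $\mathcal{C} = \mathcal{B} + [\mathcal{B},\mathcal{B}]$, bilinearity of the bracket reduces this to four inclusions:
\[
[\mathcal{A},\mathcal{B}],\ [\mathcal{B},\mathcal{B}],\ [\mathcal{A},[\mathcal{B},\mathcal{B}]],\ [\mathcal{B},[\mathcal{B},\mathcal{B}]] \ \subset\ \mathcal{C}.
\]

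Two of these are immediate from the hypotheses: $[\mathcal{A},\mathcal{B}] \subset \mathcal{B} \subset \mathcal{C}$ and $[\mathcal{B},\mathcal{B}] \subset \mathcal{C}$ by construction. For the third, I would apply the Jacobi identity together with $[\mathcal{A},\mathcal{B}] \subset \mathcal{B}$ to obtain
\[
[\mathcal{A},[\mathcal{B},\mathcal{B}]] \subset [[\mathcal{A},\mathcal{B}],\mathcal{B}] + [\mathcal{B},[\mathcal{A},\mathcal{B}]] \subset [\mathcal{B},\mathcal{B}] \subset \mathcal{C}.
\]

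The only nontrivial case is the fourth one, $[\mathcal{B},[\mathcal{B},\mathcal{B}]] \subset \mathcal{C}$, where a naive application of the Jacobi identity is circular (it only rewrites the triple bracket in terms of itself). The key trick is to use the ambient decomposition $\mathcal{L} = \mathcal{A} + \mathcal{B}$ at the \emph{inner} bracket: given $b, b_1, b_2 \in \mathcal{B}$, write $[b_1,b_2] = a + b'$ with $a \in \mathcal{A}$ and $b' \in \mathcal{B}$. Then
\[
[b,[b_1,b_2]] = [b,a] + [b,b'] = -[a,b] + [b,b'] \in \mathcal{B} + [\mathcal{B},\mathcal{B}] = \mathcal{C},
\]
where the first summand lies in $\mathcal{B}$ by the hypothesis $[\mathcal{A},\mathcal{B}] \subset \mathcal{B}$. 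Summing over spanning elements proves $[\mathcal{B},[\mathcal{B},\mathcal{B}]] \subset \mathcal{C}$, and the lemma follows.

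The proof is essentially formal, and I do not expect a real obstacle; the only genuine idea is the decomposition trick in the last step, which converts a triple $\mathcal{B}$-bracket into a sum of a single $[\mathcal{A},\mathcal{B}]$-bracket and a single $[\mathcal{B},\mathcal{B}]$-bracket by splitting the inner bracket along $\mathcal{L} = \mathcal{A}+\mathcal{B}$.
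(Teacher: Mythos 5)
Your proof is correct and complete; the key observation in the fourth step is exactly the right one, namely that $[\mathcal{B},\mathcal{B}]\subset\mathcal{L}=\mathcal{A}+\mathcal{B}$, so that $[\mathcal{B},[\mathcal{B},\mathcal{B}]]\subset[\mathcal{B},\mathcal{A}]+[\mathcal{B},\mathcal{B}]\subset\mathcal{C}$. The paper states this lemma without proof, citing \cite[Lemma 6]{Mathieu-jalg}, and your direct verification is the natural argument one would expect to find there.
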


Let $L$ be a linear subspace of $\Lc$. We say that 
an element $x\in\mathcal{L}$ is {\it locally $L$-nilpotent} if we have $\Ad(L)^{1+n}(x)=0$ for 
$n\gg  0$. Let $\Nc$ be the space of all locally $L$-nilpotent
elements and set $\mathcal{I}\coloneqq \cap_{N\geq 0}\,\Ad(L)^N(\mathcal{L})$.

\begin{lemma}\label{idealI} 

The subspace $\Nc$ is a Lie subalgebra and 
 $\mathcal I$ is a $\Nc$-submodule.

Consequently,  if $\Nc=\Lc$, the subspace $\mathcal I$ is an ideal.
 
\end{lemma}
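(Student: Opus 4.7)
The plan is to establish both assertions by tracking how the nilpotency order under $\ad(L)$ behaves under brackets: the first follows from iterated Leibniz, while the second requires a double induction showing that $\ad(n)$ for $n\in\Nc$ shifts the decreasing filtration $\mathcal{L}\supseteq\Ad(L)^1(\mathcal{L})\supseteq\Ad(L)^2(\mathcal{L})\supseteq\cdots$ by a bounded amount.

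For the first assertion, I would take $x,y\in\Nc$ with $\Ad(L)^a(x)=0$ and $\Ad(L)^b(y)=0$. Since each $\ad(\ell)$, $\ell\in L$, is a derivation of the Lie bracket, iterating the Leibniz rule yields, for any $\ell_1,\dots,\ell_n\in L$, an expansion
\[\ad(\ell_1)\cdots\ad(\ell_n)([x,y])=\sum_{S\subseteq\{1,\dots,n\}}\bigl[\Phi_S(x),\Phi_{S^c}(y)\bigr],\]
where $\Phi_S$ denotes the induced ordered composition of those $\ad(\ell_i)$ with $i\in S$. Every summand with $|S|\geq a$ or $|S^c|\geq b$ vanishes, so for $n\geq a+b-1$ the whole sum is zero; hence $\Ad(L)^{a+b-1}([x,y])=0$ and $[x,y]\in\Nc$.

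For the second assertion, I would prove by a double induction -- outer on $a$, inner on $M$ -- the filtration bound: if $\Ad(L)^a(n)=0$, then $\ad(n)\bigl(\Ad(L)^M(\mathcal{L})\bigr)\subseteq\Ad(L)^{M-a+1}(\mathcal{L})$ for every $M\geq a-1$, under the convention $\Ad(L)^k(\mathcal{L})=\mathcal{L}$ for $k\leq 0$. The base $a=1$ is immediate, since $[n,L]=0$ forces $\ad(n)$ to commute with each $\ad(\ell)$ and thus to preserve every $\Ad(L)^M(\mathcal{L})$. For the inductive step, I expand
\[\ad(n)\ad(\ell_1)\cdots\ad(\ell_M)(x)=\ad(\ell_1)\,\ad(n)\ad(\ell_2)\cdots\ad(\ell_M)(x)+\ad([n,\ell_1])\ad(\ell_2)\cdots\ad(\ell_M)(x);\]
the first summand lies in $\Ad(L)^{M-a+1}(\mathcal{L})$ by the inner induction on $M$ (applied to the same $n$), and since $[n,\ell_1]=-\ad(\ell_1)(n)$ satisfies $\Ad(L)^{a-1}([n,\ell_1])\subseteq\Ad(L)^a(n)=0$, the outer induction applied to $[n,\ell_1]$ places the second summand in $\Ad(L)^{(M-1)-(a-1)+1}(\mathcal{L})=\Ad(L)^{M-a+1}(\mathcal{L})$.

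Feeding $i\in\mathcal{I}=\bigcap_M\Ad(L)^M(\mathcal{L})$ into this bound yields $\ad(n)(i)\in\Ad(L)^{M-a+1}(\mathcal{L})$ for every $M$, hence $\ad(n)(i)\in\mathcal{I}$, showing that $\mathcal{I}$ is an $\Nc$-submodule. The final statement is then immediate: if $\Nc=\mathcal{L}$, then $[\mathcal{L},\mathcal{I}]\subseteq\mathcal{I}$, so $\mathcal{I}$ is an ideal of $\mathcal{L}$. The only slightly delicate point will be setting up the double induction cleanly so that the base cases (small $M$ or small $a$) do not refer back to themselves; this is handled here by restricting to $M\geq a-1$ and declaring $\Ad(L)^k(\mathcal{L})=\mathcal{L}$ for $k\leq 0$.
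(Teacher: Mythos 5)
Your proof is correct and uses essentially the same mechanism as the paper: the first half is precisely the paper's Leibniz-type containment $\Ad(L)^{N}([x,y])\subset\sum_{0\leq k\leq N}[\Ad(L)^{k}(x),\Ad(L)^{N-k}(y)]$, and your double induction for the second half establishes the filtration bound $\ad(n)(\Ad(L)^M(\mathcal{L}))\subseteq\Ad(L)^{M-a+1}(\mathcal{L})$, which is exactly the consequence the paper extracts from its second identity $[x,\Ad(L)^{N}(y)]\subset\sum_{0\leq k\leq N}\Ad(L)^{N-k}([\Ad(L)^{k}(x),y])$. The only difference is organizational: the paper states the iterated Jacobi identity as a single combinatorial containment, whereas you unwind it by an explicit lexicographic induction on $(a,M)$.
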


\begin{proof} Let $x, y\in\mathcal{L}$. 
For any $N\geq 0$, we have

\begin{enumerate}[leftmargin=*,label=\rm{(\roman*)}]

\item
$\Ad(L)^{N}([x,y])
\subset \sum_{0 \leq k\leq N}
([\Ad(L)^{k}(x),\Ad(L)^{N-k}(y)])$,

\medbreak
\item
$[x,\Ad(L)^{N}(y)]
\subset \sum_{0 \leq k\leq N}
\Ad(L)^{N-k}([\Ad(L)^{k}(x), y]).$
\end{enumerate}
 
The first identity shows that 
$[\Nc,\Nc]\subset\Nc$, i.e. $\Nc$ is a Lie subalgebra.
The second identity shows that
$[\Nc,\Ic]\subset\Ic$, i.e. $\Ic$ is a $\Nc$-submodule.
\end{proof}

\subsection{\it  A dichotomy for the \texorpdfstring{$\Z$}{}-graded Lie algebras of rank \texorpdfstring{$\geq 2$}{}}

\

\medbreak
\noindent Let $\mathcal{L}$ be a simple 
$\Z$-graded Lie algebra of rank $\geq 2$.
We now define two hypothetical properties, and show that
any such $\mathcal{L}$ satisfies one of them.
By the end of the section it will be clear that these properties are mutually exclusive.

\medbreak
To start with, we define the notion of a string. Let $\widetilde\alpha\in Q$ and 
$\widetilde\beta\in \Delta$.
There are $a,b\in\Z\cup\{\pm\infty\}$
with $a<0<b$ such that

\begin{enumerate}[leftmargin=*,label=\rm{(\roman*)}]
\item $\widetilde\beta+k\widetilde\alpha$ belongs to $\Delta$ for any $k\in]a,b[$, but
	
\medbreak
\item neither 
$\widetilde\beta+a\widetilde\alpha$ nor
$\widetilde\beta+b\widetilde\alpha$ belongs to 
$\Delta$.
\end{enumerate}

\medbreak
\noindent The set 
$\{\widetilde\beta+k\widetilde\alpha\mid k\in]a,b[\}$ is called the 
\emph{$\widetilde\alpha$-string through $\widetilde\beta$}.

\medbreak
The first hypothetical property  
$(\mathcal{H}_{\operatorname{re}})$ is the following: 
\begin{align}\tag{$\mathcal{H}_{\operatorname{re}}$} 
\begin{aligned}
&\text{There exist }\widetilde{\alpha}\in\Delta_{\operatorname{re}}, \quad
\widetilde{\beta}\in \Delta, \quad 
\widetilde{\beta} \notin \Q.\widetilde{\alpha},  \text{ such that}
\\ &\text{the $\widetilde\alpha$-string through $\widetilde\beta$ is infinite.}
\end{aligned}
\end{align}
 The hypothesis $(\mathcal{H}_{\operatorname{re}})$ is  obviously stronger  than  $(\mathcal{H})$.

\medbreak
The second hypothetical property is the notion of weak integrability.
Following \cite{Mathieu-inv2}, we say that $\mathcal{L}$ is {\it weakly integrable} if,
for any $\widetilde{\alpha}\in\Delta_{\operatorname{re}}$, we have 
\begin{align*}
\bigcap_{n\geq 0}\,\Ad(\mathcal{L}^{\widetilde\alpha})^n(\mathcal{L})=0.
\end{align*}

\begin{lemma}\label{dicho} Let $\mathcal{L}$ be a simple $\Z$-graded algebra of
rank $\geq 2$. Then either

\begin{enumerate}[leftmargin=*,label=\rm{(\alph*)}]
\item   $\mathcal{L}$ satisfies the hypothesis 
$(\mathcal{H}_{\operatorname{re}})$, or

\item  $\mathcal{L}$ is weakly integrable.
\end{enumerate}
\end{lemma}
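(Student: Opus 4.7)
My plan is to prove the contrapositive: assuming $(\mathcal{H}_{\operatorname{re}})$ fails, I will show that for every real root $\widetilde\alpha$ the subspace $\mathcal{I}_\alpha\coloneqq \bigcap_{n\geq 0}\Ad(\mathcal{L}^{\widetilde\alpha})^n(\mathcal{L})$ vanishes. Fix such an $\widetilde\alpha$, write $L\coloneqq \mathcal{L}^{\widetilde\alpha}$, and introduce the $\fh$-stable subspaces $\mathcal{R}\coloneqq \bigoplus_{k\in\Z}\mathcal{L}^{k\widetilde\alpha}$ and $\mathcal{X}\coloneqq \bigoplus_{\widetilde\delta \notin \Q.\widetilde\alpha}\mathcal{L}^{\widetilde\delta}$, so that $\mathcal{L} = \mathcal{R}\oplus \mathcal{X}$.

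The first step is to locate the support of $\mathcal{I}_\alpha$: it is $\fh$-stable, hence decomposes along the root spaces, and its component at the root $\widetilde\gamma$ lies in $\Ad(L)^n(\mathcal{L}^{\widetilde\gamma - n\widetilde\alpha})$ for every $n\geq 0$. For $\widetilde\gamma\notin \Q.\widetilde\alpha$, the failure of $(\mathcal{H}_{\operatorname{re}})$ forces the $\widetilde\alpha$-string through $\widetilde\gamma$ to be finite, so $\mathcal{L}^{\widetilde\gamma - n\widetilde\alpha} = 0$ for $n\gg 0$, and that component vanishes. Hence $\mathcal{I}_\alpha \subset \mathcal{R}$.

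Next I would show $\mathcal{I}_\alpha$ centralizes $\mathcal{X}$. The same finite-string hypothesis, used this time upward, says every $\mathcal{L}^{\widetilde\delta}$ with $\widetilde\delta \notin \Q.\widetilde\alpha$ is killed by large powers of $\Ad(L)$, so $\mathcal{X}$ sits inside the subalgebra $\Nc$ of locally $L$-nilpotent elements. Lemma \ref{idealI} then yields $[\mathcal{X},\mathcal{I}_\alpha]\subset \mathcal{I}_\alpha\subset \mathcal{R}$. An immediate root computation gives $[\mathcal{R},\mathcal{X}]\subset \mathcal{X}$, and since $\mathcal{I}_\alpha\subset \mathcal{R}$ we also get $[\mathcal{X},\mathcal{I}_\alpha]\subset \mathcal{X}$. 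Combining the two inclusions forces $[\mathcal{X},\mathcal{I}_\alpha]\subset \mathcal{R}\cap \mathcal{X} = 0$.

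To conclude, I propagate this centralization to all of $\mathcal{L}$. The decomposition $\mathcal{L}=\mathcal{R}+\mathcal{X}$ together with $[\mathcal{R},\mathcal{X}]\subset \mathcal{X}$ satisfies the hypothesis of Lemma \ref{b+bb}, so $\mathcal{X}+[\mathcal{X},\mathcal{X}]$ is a graded ideal of $\mathcal{L}$. Because the rank is at least two, there is a root outside $\Q.\widetilde\alpha$, so $\mathcal{X}\neq 0$; simplicity of $\mathcal{L}$ then forces $\mathcal{X}+[\mathcal{X},\mathcal{X}] = \mathcal{L}$. A single Jacobi identity shows that any element centralizing $\mathcal{X}$ also centralizes $[\mathcal{X},\mathcal{X}]$, so $\mathcal{I}_\alpha$ centralizes $\mathcal{L}$, i.e.\ $\mathcal{I}_\alpha\subset Z(\mathcal{L})$. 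Since a simple $\Z$-graded Lie algebra of dimension $\geq 2$ has trivial center (any nonzero homogeneous central element would span a proper graded ideal), I conclude $\mathcal{I}_\alpha = 0$. The delicate point is the second paragraph, where the failure of $(\mathcal{H}_{\operatorname{re}})$ must be invoked twice, once downward to pin $\mathcal{I}_\alpha$ inside $\mathcal{R}$ and once upward to place $\mathcal{X}$ inside $\Nc$ so that Lemma \ref{idealI} applies; the final paragraph is then a routine application of Lemma \ref{b+bb} combined with simplicity.
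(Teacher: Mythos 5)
Your proof is correct and closely parallels the paper's: both prove the contrapositive, both use the failure of $(\mathcal{H}_{\operatorname{re}})$ twice (downward to pin $\mathcal{I}_\alpha$ inside the ``line'' part, upward to place $\mathcal{X}$ inside $\Nc$), and both invoke Lemmas \ref{idealI} and \ref{b+bb} together with simplicity to get $\mathcal{X}+[\mathcal{X},\mathcal{X}]=\mathcal{L}$. The endgames differ slightly: the paper uses that last equality to conclude $\Nc=\mathcal{L}$, so Lemma \ref{idealI} upgrades $\mathcal{I}_\alpha$ to a graded ideal, which is then proper (it misses every $\mathcal{L}^{\widetilde\beta}$ with $\widetilde\beta\notin\Q\widetilde\alpha$, and rank $\geq 2$ guarantees such a root) and hence zero by simplicity. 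You instead stop at $\mathcal{I}_\alpha$ being an $\Nc$-submodule, deduce $[\mathcal{X},\mathcal{I}_\alpha]\subset\mathcal{I}_\alpha\cap\mathcal{X}=0$, propagate the centralization to $[\mathcal{X},\mathcal{X}]$ via Jacobi, and finish by observing that the center of a simple graded Lie algebra is trivial. Your route avoids proving $\Nc=\mathcal{L}$ at the cost of an extra small argument about the center; the paper's is marginally more direct. One small slip to fix: you define $\mathcal{R}=\bigoplus_{k\in\Z}\mathcal{L}^{k\widetilde\alpha}$, but for $\mathcal{L}=\mathcal{R}\oplus\mathcal{X}$ to hold you need $\mathcal{R}=\bigoplus_{\widetilde\beta\in\Q\widetilde\alpha}\mathcal{L}^{\widetilde\beta}$, since roots on the line $\Q\widetilde\alpha$ need not be integer multiples of $\widetilde\alpha$; with that corrected definition all your inclusions ($\mathcal{I}_\alpha\subset\mathcal{R}$, $[\mathcal{R},\mathcal{X}]\subset\mathcal{X}$, $\mathcal{R}\cap\mathcal{X}=0$) still hold and the argument goes through unchanged.
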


\begin{proof} Assuming that $\mathcal{L}$ does not satisfy
$(\mathcal{H}_{\operatorname{re}})$, we will prove that $\mathcal{L}$ is weakly
integrable.
Let $\widetilde{\alpha}\in\Delta_{\operatorname{re}}$,
let $\Nc$ be the space of locally 
$\Lc^{\widetilde\alpha}$-nilpotent elements and set

\begin{align*}\mathcal{I}=\cap_{N\geq 0}\,\Ad(\Lc^{\widetilde\alpha})^N(\mathcal{L})&&
\mathcal{A} =\oplus_{\widetilde{\beta}\in
\Q.\widetilde{\alpha}}
\,\mathcal{L}^{\widetilde\beta} &&\text{and}&&
\mathcal{B}=\bigoplus_{\widetilde{\beta}\notin\Q. 
\widetilde{\alpha}}
&\mathcal{L}^{\widetilde\beta}.
\end{align*}
 
 First, we prove that $\mathcal{L}=\Nc$. 
 For any $\widetilde\beta\notin \Q.\widetilde\alpha$,
 there is an integer $N>0$ such that 
 $\Lc^{\widetilde\beta+N\widetilde\alpha}=0$.
 It follows that 
 $\Ad^N(\Lc^{\widetilde\alpha})
 (\Lc^{\widetilde\beta})=0$. Therefore
 $\Nc$ contains $\Lc^{\widetilde\beta}$ for any
 $\widetilde\beta\notin \Q.\widetilde\alpha$, i.e.
 $\Nc$ contains $\Bc$. 
 By Lemma \ref{idealI}, $\Nc$ contains 
 $\Bc+[\Bc,\Bc]$. Since $[\Ac,\Bc]\subset\Bc$ and
 $\Lc=\Ac+\Bc$, the space $\Bc+[\Bc,\Bc]$ is an ideal
 by Lemma \ref{b+bb}. By simplicity of $\Lc$, we deduce that
$\Bc+[\Bc,\Bc]=\Lc$, and therefore we have  $\Nc=\mathcal{L}$. 
 
Next, we prove that $\mathcal{I}=0$. Let
$\widetilde\beta$ be a root which is not proportional to
$\widetilde\alpha$. There is an integer $N>0$ such that 
 $\Lc^{\widetilde\beta-N\widetilde\alpha}=0$.
 Hence $\Lc^{\widetilde\beta}$ is not contained in
 $\Ad^N(\Lc^{\widetilde\alpha})
 (\Lc)$.
 It follows that $\Ic$ does not contain
 $\Lc^{\widetilde\beta}$. However, by Lemma \ref{idealI},
  $\Ic$ is an ideal
 of $\Lc$. Hence we have 
$\mathcal{I}=0$.
In other words, 
$\mathcal{L}$ is weakly integrable.
\end{proof}

\subsection{\it  Non-Noetherianity for \texorpdfstring{$\Z$}{}-graded Lie algebras of rank \texorpdfstring{$\geq 2$}{}}

\

\medbreak
\begin{cor}\label{corrk2} Let $\mathcal{L}$ be a simple $\Z$-graded algebra of
rank $\geq 2$. 
 If $U(\mathcal{L})$ is  Noetherian, then $\mathcal{L}$ is finite-dimensional.
\end{cor}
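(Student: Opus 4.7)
The plan is to apply the dichotomy of Lemma \ref{dicho} and rule out the infinite-dimensional possibilities in each branch.

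Suppose, for contradiction, that $\mathcal{L}$ is infinite-dimensional while $U(\mathcal{L})$ is Noetherian. By Lemma \ref{dicho}, either $\mathcal{L}$ satisfies $(\mathcal{H}_{\operatorname{re}})$, or $\mathcal{L}$ is weakly integrable. The first alternative is immediate: $(\mathcal{H}_{\operatorname{re}})$ is explicitly stronger than $(\mathcal{H})$---the only additional requirement is that the distinguished root $\widetilde\alpha$ be real, which plays no role in the conclusion of Lemma \ref{H1}. Therefore Lemma \ref{H1} applies, and $U(\mathcal{L})$ fails to be Noetherian, a contradiction.

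The remaining case is that $\mathcal{L}$ is weakly integrable of rank $\geq 2$. Here the input is strong: unpacking the proof of Lemma \ref{dicho} shows that $\mathcal{N}=\mathcal{L}$ and $\mathcal{I}=0$ for every real root $\widetilde\alpha$, i.e.\ $\ad(\mathcal{L}^{\widetilde\alpha})$ acts globally locally nilpotently on $\mathcal{L}$. Combined with Lemma \ref{wgraded}, which forces all homogeneous components to be finite-dimensional, the resulting root system of $\mathcal{L}$ is extremely rigid---essentially that of a Kac--Moody root system with full integrability on real root vectors. My strategy is to locate, inside $\mathcal{L}$, either an abelian section of infinite dimension (contradicting Lemma \ref{basic}\ref{abelian-section}) or a non-abelian free Lie subalgebra (whose enveloping algebra is not Noetherian, as observed after Lemma \ref{basic}). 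Either outcome would yield the contradiction, forcing $\dim\mathcal{L}<\infty$.

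The main obstacle is carrying this extraction out rigorously in the weakly integrable branch, since this is where the hard structural content of the paper must intervene. Concretely, I would appeal to the classification of integrable simple $\Z$-graded Lie algebras from \cite{Mathieu-inv2}, or alternatively try to isolate a rank-two subsystem of $\Delta$ whose associated Kac--Moody subquotient is of affine or indefinite type: in the affine case it contains a loop-type abelian subalgebra $\fh\otimes K[t,t^{-1}]$, which is an infinite-dimensional abelian section, violating Lemma \ref{basic}\ref{abelian-section}; in the indefinite case it contains a free Lie subalgebra of rank $\geq 2$ by \cite[Corollary 9.12]{Kac-libro}, again contradicting Noetherianity via Lemma \ref{basic}\ref{section}. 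Verifying that only the finite type survives---and that consequently $\mathcal{L}$ itself is finite-dimensional---is the step that requires genuine classification input rather than the elementary machinery developed so far in the paper.
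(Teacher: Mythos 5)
Your proof follows essentially the same route as the paper: both use the dichotomy of Lemma \ref{dicho}, dispose of the $(\mathcal{H}_{\operatorname{re}})$ branch via Lemma \ref{H1}, and reduce the weakly integrable branch to a classification statement from \cite{Mathieu-inv2}. The one place where you hedge---``appeal to the classification\dots or alternatively try to isolate a rank-two Kac--Moody subquotient''---is exactly where the paper commits to a single clean citation: \cite[Theorem 4]{Mathieu-inv2} states that a weakly integrable simple $\Z$-graded Lie algebra of rank $\geq 2$ is either finite-dimensional or an affine Lie algebra, after which the affine case is killed by the infinite-dimensional abelian subalgebra $\fh\otimes K[t,t^{-1}]$ exactly as you note. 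You should simply invoke that theorem; the ``alternative'' of extracting a rank-two Kac--Moody section from the root system is speculative and would need real work to justify (one cannot assume a Serre-type presentation on root vectors of an arbitrary weakly integrable algebra), and it buys you nothing the cited theorem does not already give. Aside from this excess hedging, your argument is correct and complete.
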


\begin{proof} By Lemma \ref{dicho}, 
$\mathcal{L}$ satisfies the hypothesis 
$(\mathcal{H}_{\operatorname{re}})$ or $\mathcal{L}$ is weakly integrable.
In the first case,  $U(\mathcal{L})$ is not Noetherian by Lemma
\ref{H1}.

\medbreak
In the latter case, $\mathcal{L}$ is isomorphic to an affine Lie algebra or it has finite dimension by \cite[Theorem 4]{Mathieu-inv2}. But if $\Lc$ is an affine Lie algebra,
then it has an infinite-dimensional abelian  subalgebra, hence
$U(\mathcal{L})$ is not Noetherian.
\end{proof}

\section{Proof of the Main Result}

\subsection{\it The endomorphisms of simple \texorpdfstring{$\Z^n$}{}-graded modules}

\

\medbreak
\noindent Let $\mathcal{L}$ be a $\Z^n$-graded Lie algebra and
let $M$ be a simple $\Z^n$-graded module. 

\begin{lemma}\label{nonsimplemod} If $M$ is not simple
(as a non-graded module),
then there exists   $\theta\in\End_\mathcal{L}(M)$
which is  invertible and homogeneous of  degree ${\bf p}$ for some ${\bf p}\in \Z^n\backslash 0$.
\end{lemma}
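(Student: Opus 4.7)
The plan is to prove a graded Schur-type result: I will construct a nonzero homogeneous $\mathcal{L}$-module endomorphism of nonzero degree, then use graded simplicity to upgrade nonzero to invertible. Since $M$ is not simple as a plain $\mathcal{L}$-module, pick a proper nonzero $\mathcal{L}$-submodule $N \subset M$ and choose $m \in N \setminus 0$ whose number $r$ of nonzero homogeneous components, in the decomposition $m = m_1 + \cdots + m_r$ with $m_i \in M_{\mathbf{q}_i} \setminus 0$ and pairwise distinct $\mathbf{q}_i \in \Z^n$, is minimal among nonzero elements of $N$. I claim $r \geq 2$: otherwise $N$ would contain a nonzero homogeneous element, and the graded submodule of $M$ generated by that element would be nonzero and therefore, by graded simplicity, equal to $M$, contradicting $N \subsetneq M$.

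Next, graded simplicity also gives $U(\mathcal{L}) m_1 = M$, and since $m_1$ is homogeneous this holds degreewise: the map $U(\mathcal{L})_{\mathbf{r}} \to M_{\mathbf{r}+\mathbf{q}_1}$, $u \mapsto u m_1$, is surjective for every $\mathbf{r} \in \Z^n$. Fix $k \in \{2,\ldots,r\}$ and set $\mathbf{p} = \mathbf{q}_k - \mathbf{q}_1 \neq 0$. I would then try to define a degree-$\mathbf{p}$ linear map $\theta \colon M \to M$ by $\theta(u m_1) = u m_k$ for homogeneous $u \in U(\mathcal{L})$. The only real content of the argument — and the main obstacle — is well-definedness: if $u$ is homogeneous with $u m_1 = 0$, one must check that $u m_k = 0$.

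This is exactly where the minimality of $r$ does its work. Suppose $u \in U(\mathcal{L})_{\mathbf{r}}$ satisfies $u m_1 = 0$. Since $m \in N$ and $N$ is an $\mathcal{L}$-submodule, $u m \in N$. But $u m = \sum_{i \geq 2} u m_i$, and the terms $u m_i \in M_{\mathbf{r}+\mathbf{q}_i}$ live in pairwise distinct degrees. If $u m \neq 0$, it would be a nonzero element of $N$ with at most $r - 1$ nonzero homogeneous components, contradicting minimality. Hence $u m = 0$, and by linear independence of the degree components $u m_i = 0$ for every $i \geq 2$; in particular $u m_k = 0$. Therefore $\theta$ is well-defined on each graded component, extends by linearity to $M$, and is $\mathcal{L}$-linear by construction, since $\theta(xu \cdot m_1) = xu \cdot m_k = x \cdot \theta(u m_1)$ for homogeneous $x \in \mathcal{L}$.

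To conclude, $\theta$ is a nonzero (as $\theta(m_1) = m_k \neq 0$) homogeneous $\mathcal{L}$-endomorphism of $M$ of degree $\mathbf{p} \neq 0$. Its kernel and image are graded $\mathcal{L}$-submodules of $M$, so graded simplicity forces $\ker \theta = 0$ and $\theta(M) = M$. Hence $\theta$ is invertible, which is the required conclusion. The delicate step is the well-definedness in the middle paragraph, but it is forced by the minimal-length choice of $m$; everything else is a direct application of graded simplicity.
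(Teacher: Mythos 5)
Your proof is correct and follows essentially the same strategy as the paper's: choose a nonzero element of a proper submodule with minimal support, observe it cannot be homogeneous, and use that minimality to show the annihilator inclusion $\Ann(m_1)\subset\Ann(m_k)$ (which is exactly what your well-definedness check establishes), then build $\theta$ sending $u\,m_1\mapsto u\,m_k$ and invoke graded simplicity for invertibility. The only cosmetic difference is that the paper minimizes the support cardinality over all generators of proper submodules while you fix one proper submodule $N$ and minimize within it; both choices make the cancellation step work.
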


\begin{proof} 
 Any $v\in M$ decomposes as $v=\sum_{\bf m}\,v_{\bf m}$ where 
$v_{\bf m}\in M_{\bf m}$. By definition the {\it support} of $v$ is the set
\begin{align*}
\Supp(v)\coloneqq \{{\bf n}\in\Z^n\mid v_{\bf n}\neq 0\}.
\end{align*}
Assume  that $M$ is not simple.
Let $v\in M\setminus 0$ be the generator of a proper submodule with a  support
of minimal cardinality. 
Since $M$ is  simple as a $\Z^n$-graded module, $v$ is not homogenous. Hence
$\Supp(v)$ contains distinct elements 
${\bf m}, {\bf n}$. 

\medbreak
We claim that $\Ann(v_{\bf n})\subset \Ann(v_{\bf m})$,
where $\Ann(m)$ denotes the annihilator of $m$ in 
$U(\Lc)$, for any $m\in M$. Since $\Ann(v_{\bf n})$ is
$\Z^n$-graded, it is enough to show that any homogenous
element $u\in \Ann(v_{\bf n})$ belongs to $\Ann(v_{\bf m})$.
Since $u.v_{\bf n}=0$, the support
$u . v$ lies in $({\bf d} + \Supp(v))\setminus\{{\bf d} + {\bf n}\}$, where ${\bf d}$ is the degree of $u$.
By minimality of the cardinality of 
$\Supp(v)$, we deduce that $u.v=0$ which proves the claim.

Hence there exists $\theta \in \End_{\mathcal{L}}(M)$  
mapping $v_{\bf n}$ to $v_{\bf m}$.
Clearly, $\theta$  is homogeneous of degree ${\bf p}={\bf m}-{\bf n} \in \Z^n$.
Since $\Ker \theta$ and $\Image\,\theta$ are graded submodules,  $\Ker \theta=0$ and
$\Image\,\theta=M$, hence $\theta$ is invertible.
\end{proof}

\subsection{\it  Simple \texorpdfstring{$\Z^n$}{}-graded Lie algebras which are not simple}

\

\medbreak
\noindent Let $\mathcal{L}$ be a $\Z^n$-graded Lie algebra.
The algebra of endomorphisms of the adjoint module is called the {\it centroid} of $\mathcal{L}$.

\begin{lemma}\label{centroid} If  the simple  $\Z^n$-graded Lie algebra 
$\mathcal{L}$ is not simple as a Lie algebra,
then it  contains an infinite-dimensional abelian subalgebra.
\end{lemma}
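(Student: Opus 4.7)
The plan is to apply Lemma \ref{nonsimplemod} to the adjoint module and extract an infinite-dimensional abelian subalgebra from a single nontrivial degree-shifting centroid element. First I would observe that $\Z^n$-graded (respectively, non-graded) submodules of the adjoint module $\mathcal{L}$ are precisely $\Z^n$-graded (respectively, non-graded) ideals of $\mathcal{L}$. Hence the hypothesis that $\mathcal{L}$ is simple as a $\Z^n$-graded Lie algebra (of dimension $\geq 2$) translates to simplicity of $\mathcal{L}$ as a $\Z^n$-graded $\mathcal{L}$-module, while the hypothesis that $\mathcal{L}$ is not simple as a Lie algebra translates to non-simplicity of $\mathcal{L}$ as a non-graded $\mathcal{L}$-module.

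Applying Lemma \ref{nonsimplemod} to $M=\mathcal{L}$ then furnishes an invertible $\mathcal{L}$-module endomorphism $\theta\in\End_\mathcal{L}(\mathcal{L})$ which is homogeneous of some degree ${\bf p}\in\Z^n\setminus 0$. The $\mathcal{L}$-linearity $\theta([y,x])=[y,\theta(x)]$ is equivalent, by skew-symmetry, to the centroid identities
\begin{align*}
\theta([x,y]) &= [\theta(x),y] = [x,\theta(y)] & \text{for all } x,y\in\mathcal{L}.
\end{align*}
Iterating, one gets $[\theta^i(x),\theta^j(y)]=\theta^{i+j}([x,y])$ for all $i,j\in\Z$ and $x,y\in\mathcal{L}$.

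Now I would fix any nonzero homogeneous element $x_0\in\mathcal{L}$ and consider the family $\{\theta^k(x_0)\}_{k\in\Z}$. Since $\theta$ is invertible, each $\theta^k(x_0)$ is nonzero; since it is homogeneous of degree $\deg(x_0)+k{\bf p}$ and ${\bf p}\neq 0$, these degrees are pairwise distinct, so the family is linearly independent. Taking $x=y=x_0$ in the iterated identity gives
\begin{align*}
[\theta^i(x_0),\theta^j(x_0)] &= \theta^{i+j}([x_0,x_0]) = 0,
\end{align*}
so the linear span of $\{\theta^k(x_0)\}_{k\in\Z}$ is an infinite-dimensional abelian Lie subalgebra of $\mathcal{L}$, as required. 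There is no real obstacle here: once Lemma \ref{nonsimplemod} is in hand, the only thing to verify is that an $\mathcal{L}$-linear endomorphism automatically lies in the centroid, after which the abelian subalgebra is produced by a single orbit computation.
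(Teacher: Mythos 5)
Your proof is correct and follows essentially the same route as the paper's: apply Lemma \ref{nonsimplemod} to the adjoint module to obtain an invertible homogeneous centroid element $\theta$ of nonzero degree, then observe that the orbit $\{\theta^k(x_0)\}_{k\in\Z}$ of a nonzero homogeneous element spans an infinite-dimensional abelian subalgebra because $[\theta^i(x_0),\theta^j(x_0)]=\theta^{i+j}([x_0,x_0])=0$ and the degrees are pairwise distinct. The only difference is that you spell out a few steps the paper leaves implicit (the translation between graded/ungraded ideals and submodules, the derivation of the two-sided centroid identity from $\mathcal{L}$-linearity, and the invertibility of $\theta$ guaranteeing that $\theta^k(x_0)\neq 0$).
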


\begin{proof} By Lemma \ref{nonsimplemod}, there is 
an element $\theta \neq 0$ in the centroid which is homogeneous of 
degree ${\bf m }\in\Z^n\setminus 0$. 
Let $0 \neq x \in \Lc$ be an homogeneous element. Let
$\fm$ be the linear span of $\{\theta^{p}(x): p \in \Z\}$.
For  $p,q\in\Z$, we have 
\begin{align*}[\theta^{p}(x),\theta^{q}(x)]=\theta^{p+q} ([x,x])=0,
\end{align*}
\noindent hence  $\fm$  is a abelian subalgebra. Moreover the elements $\theta^{p}(x)$ are nonzero elements of different degrees, hence $\fm$ is infinite-dimensional.
\end{proof}

\begin{cor}\label{nonsimplecor} Assume that   the simple  $\Z^n$-graded Lie algebra 
$\mathcal{L}$ is not simple as a Lie algebra.
Then $U(\mathcal{L})$ is not Noetherian.
\end{cor}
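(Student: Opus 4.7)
The proof I propose is essentially a one-line combination of two results already established in the paper. The plan is to observe that Lemma \ref{centroid} produces an infinite-dimensional abelian Lie subalgebra $\fm \subset \mathcal{L}$ under the stated hypothesis, and that such a subalgebra is, \emph{a fortiori}, an abelian section of $\mathcal{L}$. The Noetherianity of $U(\mathcal{L})$ is then ruled out by Lemma \ref{basic}\ref{abelian-section}, which forces any abelian section of a Lie algebra with Noetherian enveloping algebra to be finite-dimensional.

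More concretely, I would argue by contradiction. Suppose $U(\mathcal{L})$ is Noetherian. Since $\mathcal{L}$ is simple as a $\Z^n$-graded Lie algebra but not simple as a Lie algebra, Lemma \ref{centroid} yields an infinite-dimensional abelian subalgebra $\fm \subset \mathcal{L}$. Taking $\fq = \fm$ and $\mathfrak{m}' = 0$ in the definition of a section, we see that $\fm$ is an abelian section of $\mathcal{L}$, so Lemma \ref{basic}\ref{abelian-section} gives $\dim \fm < \infty$, contradicting the infinite-dimensionality of $\fm$. Hence $U(\mathcal{L})$ is not Noetherian.

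There is no real obstacle to overcome here: the work has all been done in Lemmas \ref{centroid} and \ref{basic}. The only stylistic choice is whether to invoke Lemma \ref{basic}\ref{abelian-section} directly on the subalgebra, or to first pass through the easier observation that a subalgebra of $\mathcal{L}$ is automatically a section, and then apply the abelian-section statement. Either formulation gives an essentially immediate proof.
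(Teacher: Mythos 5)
Your proposal is correct and matches the paper's proof exactly: both invoke Lemma \ref{centroid} to obtain an infinite-dimensional abelian subalgebra and then apply Lemma \ref{basic}\ref{abelian-section} to rule out Noetherianity. The only difference is that you spell out the trivial observation that a subalgebra is a section, which the paper leaves implicit.
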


\begin{proof} 
This is a consequence of  Lemmas \ref{centroid} and  \ref{basic} \ref{abelian-section}.
\end{proof}

\subsection{\it  Proof of the main result}

\

\medbreak
\noindent We can now prove the main Theorem of  this article.

\begin{mtheorem}
Let $\Lc$ be a simple $\Z^n$-graded  Lie algebra
of infinite dimension.  
Its enveloping algebra $U(\Lc)$ is not Noetherian.
\end{mtheorem}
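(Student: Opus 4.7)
My plan is to reduce the general $\Z^n$-graded case directly to the $\Z$-graded case already established in Corollaries \ref{corV}, \ref{corS}, and \ref{corrk2}. Argue by contradiction: assume $\mathcal{L}$ is an infinite-dimensional simple $\Z^n$-graded Lie algebra with $U(\mathcal{L})$ Noetherian. If $\mathcal{L}$ fails to be simple as an abstract Lie algebra, Corollary \ref{nonsimplecor} immediately supplies the contradiction. So henceforth I assume $\mathcal{L}$ is simple as a Lie algebra.

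Next I would collapse the $\Z^n$-grading to a $\Z$-grading by projection. Since $\dim\mathcal{L}=\infty$ while every $\mathcal{L}_{\mathbf{m}}$ is finite-dimensional, $\Supp(\mathcal{L})$ contains some $\mathbf{m}_0\neq 0$. Choose any group homomorphism $\phi\colon\Z^n\to\Z$ with $\phi(\mathbf{m}_0)\neq 0$ (for instance, a suitable coordinate projection), and set
\begin{align*}
\mathcal{L}_k^{\phi} \coloneqq \bigoplus_{\phi(\mathbf{m}) = k}\,\mathcal{L}_{\mathbf{m}}, \qquad k\in\Z.
\end{align*}
This endows $\mathcal{L}$ with a weakly $\Z$-graded structure satisfying $\mathcal{L}\neq \mathcal{L}_0^{\phi}$, since $\mathcal{L}_{\mathbf{m}_0}\subset \mathcal{L}_{\phi(\mathbf{m}_0)}^{\phi}$.

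The key step is then Lemma \ref{wgraded}: because $\mathcal{L}$ is simple as a Lie algebra (a fortiori simple as a weakly $\Z$-graded Lie algebra), and $U(\mathcal{L})$ is Noetherian by assumption, every component $\mathcal{L}_k^{\phi}$ --- including $\mathcal{L}_0^{\phi}$ --- must be finite-dimensional. Thus the $\phi$-grading is a genuine $\Z$-grading in the paper's sense, exhibiting $\mathcal{L}$ as an infinite-dimensional simple $\Z$-graded Lie algebra. The $n=1$ case of Theorem \ref{thm:graded-simple}, already covered by Corollaries \ref{corV}, \ref{corS}, and \ref{corrk2} according as $\mathcal{L}$ has rank one of class $\Vs$, rank one of class $\Ssr$, or rank $\geq 2$, then forces $U(\mathcal{L})$ not to be Noetherian --- the final contradiction.

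There is no substantive obstacle in this plan beyond checking that Lemma \ref{wgraded} applies uniformly to any choice of nonzero $\phi$; in particular, no delicate condition on $\Supp(\mathcal{L})$ (such as avoiding special hyperplanes) is required, which is precisely what makes the reduction short. All the real content sits in Corollary \ref{nonsimplecor} for the non-simple-as-Lie-algebra case and in the three $\Z$-graded corollaries; the $n\geq 2$ part of the argument contributes only the two-line projection trick above.
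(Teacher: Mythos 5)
Your proposal is correct and takes essentially the same route as the paper: reduce to the abstractly simple case via Corollary~\ref{nonsimplecor}, collapse the $\Z^n$-grading to a $\Z$-grading by a projection not killing a chosen nonzero support element, invoke Lemma~\ref{wgraded} to ensure all new homogeneous components are finite-dimensional, and then appeal to the rank-one and rank-$\geq 2$ corollaries. The only cosmetic difference is that you use an arbitrary homomorphism $\phi\colon\Z^n\to\Z$ where the paper fixes the first coordinate projection; this is the same argument.
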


\begin{proof} We can  assume that  $\mathcal{L}$ is simple as a Lie algebra, otherwise $U(\mathcal{L})$ is not Noetherian by Corollary
\ref{nonsimplecor}.

There exists ${\bf m}=(m_1,\dots,m_n)\neq 0$ such that
$\mathcal{L}_{\bf m}\neq 0$. Without loss of generality, we can assume that $m_1\neq 0$. Define the weakly $\Z$-graded Lie algebra
$\mathcal{L}'$ (which is $\mathcal{L}$  as Lie algebra) by the requirement that
\begin{align*}
\mathcal{L}'_m &= \bigoplus_{ (m_2,\dots,m_n)\in\Z^{n-1}} 
\mathcal{L}_{(m,m_2,\dots,m_n)}.
\end{align*}
  We can assume that all homogeneous components of
 $\mathcal{L}'$ are finite-dimen\-sional, otherwise 
 $U(\mathcal{L})$ is not Noetherian by Lemma
\ref{wgraded}.
 
 \medbreak
 Therefore $\mathcal{L}'$ is a simple $\Z$-graded Lie algebra. If $\mathcal{L}'$ has rank one,
$U(\mathcal{L})$ is not Noetherian by Corollaries
\ref{corV} and \ref{corS}. Otherwise 
$\mathcal{L}'$ has rank $\geq 2$ and $U(\mathcal{L})$ is not Noetherian by Corollary \ref{corrk2}.
 \end{proof} 

\begin{remark}
Let $\Lc$ be a  $\Z^n$-graded  Lie algebra. 
If $\Lc$ has a simple infinite-dimensional graded section, then
Theorem \ref{thm:graded-simple} implies that $U(\Lc)$ is not Noetherian.
In other words, if $U(\Lc)$ is Noetherian, then any simple graded section has finite dimension,
in particular any maximal graded ideal has finite codimension.
\end{remark}

\begin{remark} In the theorem,  we had assumed that all homogenous components of $\Lc$ are finite-dimensional.
In fact we can replace this condition by the weaker hypothesis that $\Lc\neq \Lc_0$, see Lemma \ref{wgraded}.
\end{remark}

\subsection*{Acknowledgements} 
The authors  thank Efim Zelmanov and Slava Futorny 
for the warm hospitality during their visit  to the 
Shenzhen International Center for Mathematics.

\end{document}